\newcommand\reallywidehat[1]{%
\savestack{\tmpbox}{\stretchto{%
  \scaleto{%
    \scalerel*[\widthof{\ensuremath{#1}}]{\kern-.6pt\bigwedge\kern-.6pt}%
    {\rule[-\textheight/2]{1ex}{\textheight}}
  }{\textheight}%
}{0.5ex}}%
\stackon[1pt]{#1}{\tmpbox}%
}
\definecolor{myback}{RGB}{204,232,207}
\numberwithin{equation}{section}
\numberwithin{equation}{section}
\let\na=\nabla
\let\pa=\partial
\def\no{\noindent}
\def\eqdef{\buildrel\hbox{\footnotesize def}\over =}
\newcommand{\beq}{\begin{equation}}
\newcommand{\eeq}{\end{equation}}
\newcommand{\ben}{\begin{eqnarray}}
\newcommand{\een}{\end{eqnarray}}
\newcommand{\beno}{\begin{eqnarray*}}
\newcommand{\eeno}{\end{eqnarray*}}
\newtheorem{theorem}{Theorem}[section]
\newtheorem{lemma}[theorem]{Lemma}
\newtheorem{proposition}[theorem]{Proposition}
\newtheorem{remark}[theorem]{Remark}
\begin{document}
\title{Viscosity driven instability of shear flows without boundaries}

\author{Hui Li}
\address[H. Li]{Department of Mathematics, New York University Abu Dhabi, Saadiyat Island, P.O. Box 129188, Abu Dhabi, United Arab Emirates.}
\email{lihuiahu@126.com, lihui@nyu.edu}

\author{Weiren Zhao}
\address[W. Zhao]{Department of Mathematics, New York University Abu Dhabi, Saadiyat Island, P.O. Box 129188, Abu Dhabi, United Arab Emirates.}
\email{zjzjzwr@126.com, wz19@nyu.edu}

\begin{abstract}
In this paper, we study the instability effect of viscous dissipation in a domain without boundaries. We construct a shear flow that is initially spectrally stable but evolves into a spectrally unstable state under the influence of viscous dissipation. To the best of our knowledge, this is the first result of viscosity driven instability that is not caused by boundaries. 
\end{abstract}

\maketitle

\section{Introduction}
Consider the two-dimensional incompressible Navier-Stokes equation on $\mathbb{T}\times\mathbb{R}$ (or $\mathbb{R}^2$)
\begin{equation}\label{eq:NS}
  \left\{
    \begin{array}{l}
    \pa_tV+V\cdot\na V+\na P-\nu\Delta V=0,\\
    \na\cdot V=0,\quad V(0,x,y)=V_{in}(x,y),
    \end{array}
  \right.
\end{equation}
where $\nu>0$ denotes the viscosity, $V=(V^{(1)},V^{(2)})$ is the velocity field and $P$ is the pressure. Let $W=\pa_xV_2-\pa_y V_1$ be the vorticity. The vorticity form of \eqref{eq:NS} is
\begin{align}\label{eq:NS1}
  \pa_tW+V\cdot\na W-\nu \Delta W=0.
\end{align}

When $\nu=0$, system \eqref{eq:NS} becomes the Euler equation for inviscid flow. Each shear flow $V_s=\left(b(y),0\right)$ is a steady solution of the Euler equation. 

However, for the Navier-Stokes equation, shear flows evolve according to the following heat equation:
\begin{align}\label{eq-heat}
  \pa_tb(t,y)-\nu\pa_{yy}b(t,y)=0,
\end{align}
and only the Couette flow $b(y)=y$ is a steady solution to \eqref{eq-heat}.

A natural question arises: if a perturbation is introduced around a shear flow, will the solution remain close to the original flow? This kind of questions gave rise to the  hydrodynamic stability theory, which can be traced back to the pioneering works of Rayleigh \cite{Rayleigh1880}, Kelvin \cite{Kelvin1887}, Reynolds \cite{reynolds1883}, Orr \cite{Orr1907}, and Sommerfeld \cite{Sommerfeld1908}. In this paper, we discuss the effect of viscosity on the stability of shear flows.


For the inviscid flow $\nu=0$, the linearized equation \eqref{eq:NS1} around $\left(b(y),0\right)$ is
\begin{align}\label{eq-linEuler}
  \pa_t\Delta \psi+b\pa_x\Delta\psi-b''\pa_x\psi=0,
\end{align}
where $\omega$ and $\psi=\Delta^{-1}\omega$ are the vorticity and stream function of the perturbation. Taking $\psi=\phi(y)e^{ik(x-ct)}$ with $k$ be the wave number in the $x$ direction and $c=c_r+ic_i$ the complex wave speed, we obtain from \eqref{eq-linEuler} the Rayleigh equation
\begin{align}\label{eq-Rayleigh}
  (b-c)\left(\pa_y^2-k^2\right)\phi_k-b''\phi_k=0,
 \end{align} 
with the boundary condition $\phi_k(y)\to0$ as $y\to\pm\infty$. So for shear flow, the stability problem reduced to study the spectrum of the Rayleigh operator
\begin{align*}
  \mathcal R_{b,k}=b \mathrm{Id}-b''(\pa_y^2-k^2)^{-1}.
\end{align*}
A shear flow is said to be spectrally unstable if it has an unstable eigenvalue, specifically when $kc_i>0$. A classical result of Rayleigh \cite{Rayleigh1880} provides a sufficient condition for the existence of such an unstable eigenvalue:  there must be an inflection point $y=y_s$ such that $b''(y_s)=0$. This condition was later improved by Fj\o{}rtoft \cite{fjortoft1950application}. Howard \cite{Howard1961} introduces the semicircle theorem, gives the possible distribution of the eigenvalue. Additionally, Squire's theorem \cite{squire1933} shows that for shear flows, spectral instability in 2D is equivalent to spectral instability in 3D.  

For the Couette flow without boundary, there is no point spectrum, and continuous spectrum distribute on $\mathbb R$. Orr \cite{Orr1907} observe that the velocity field decays back to equilibrium in $L^2$. This stability effect is called inviscid damping. The mechanism leading to the inviscid damping is the vorticity mixing driven by the shear flow.  Case \cite{Case1960} and Dikii \cite{dikii1964} discuss the linear inviscid damping for the channel cases. In the breakthrough paper \cite{BM2015},  Bedrossian-Masmoudi proved the nonlinear inviscid damping of the Couette flow when the perturbations are in the Gevrey-$m$ class ($1\leq m<2$). See also \cite{IonescuJia2020cmp} for the finite channel setting. 

For general monotonic shear flows, the continuous spectrum is also located on $\mathbb R$. Rosencrans and Sattinger \cite{RosSat1966} point out the stability of the continuous spectrum for monotonic flows. We say that a monotonic flow is spectrally stable if it has only continuous spectrum. However, the Rayleigh operator may not be self-adjoint,  the presence of a nonlocal term makes the stability problem more complex. The linear inviscid damping for such spectrally stable monotonic flows was rigorously proved by Wei-Zhang-Zhao \cite{WeiZhangZhao2018}. See also related results \cite{LinZeng2011,Zillinger2017jde,Jia2020siam,Jia2020arma}. Recent works by Ionescu-Jia \cite{IJ2020} and Masmoudi-Zhao \cite{MasmoudiZhao2020} have established the nonlinear inviscid damping for spectrally stable monotonic shear flows.  

For the viscous flow $\nu>0$, the vorticity mixing effect of shear flow leads to a stability mechanism known as enhanced dissipation, where the decay rate of vorticity is significantly higher than the diffusive decay rate. Enhanced dissipation was first observed by Kelvin \cite{Kelvin1887} in the study of Couette flow and has since been widely studied. An important related topic is known as the transition threshold. Since this is not the focus of the paper, we will not provide a detailed explanation. Interested readers can refer to the following references \cite{trefethen1993,BGM2017,BGM2020,BMV2016,LiWeiZhang2020,ChenWeiZhang2020,MasmoudiZhao2019,LiMasmoudiZhao2022critical,LMZ2022G,chen2019linear,CWZZ2023,bedrossian2024uniform} and the works cited therein.

There is a strong connection between the asymptotic stability of inviscid and viscous fluids. For the spectrally stable monotonic shear flows, Chen-Wei-Zhang \cite{CWZ2022} and Jia \cite{Jiahao2022} proved the linear asymptotic stability with viscosity in the finite channel $\mathbb{T}\times[0,1]$ and unbounded channel $\mathbb{T}\times\mathbb{R}$ respectively. It is important to note that any shear flow is a steady state of the Euler equations; however, for the Navier-Stokes equation, shear flows evolve according to \eqref{eq-heat}. In \cite{CWZ2022,Jiahao2022}, an external force is added to keep the shear flow unchanged. The authors of this paper proved the nonlinear asymptotic stability without  the need for an external force in \cite{li2023asymptotic}, under the assumption  that at each time the shear flow is spectrally stable:
\begin{itemize}
    \item[(\textasteriskcentered)] For any $t \geq 0$, $k\in\mathbb Z\setminus\{0\}$, the Rayleigh operator $\mathcal{R}_{b(t),k} = b(t,y)\mathrm{Id} -  b''(t,y)(\pa_y^2-k^2)^{-1}$ has no embedded eigenvalues or eigenvalues.
\end{itemize}
Very recently, \cite{beekie2024transition} presented the nonlinear stability without external force in a finite time, during which the shear flow remains very close to the spectrally stable initial state. See also the stability results for bounded monotonic shear flows in \cite{GNRS2020}.

Naturally, one may wonder whether it is possible for a shear flow to evolve from spectrally stable state to spectrally unstable state under \eqref{eq-heat}. Is the assumption (\textasteriskcentered) in \cite{li2023asymptotic} too strong, or could we prove asymptotic stability by placing the spectral assumption only on the initial shear flow? The external force used in \cite{CWZ2022,Jiahao2022} is very small, on the order of $\nu$, and the viscosity makes the non-zero modes decay faster than the inviscid case, resulting in spectrum gap for the channel scenario. According to the analysis in \cite{li2023asymptotic}, monotonic shear flows ultimately converge to a Couette flow which is highly stable. These indications suggest that viscosity may have a stabilizing effect on shear flows. However, in this paper, we will provide an example demonstrating that \eqref{eq-heat} can cause a spectrally stable shear flow to become spectrally unstable.

When studying the problem in a domain with boundaries, it is well known that viscosity may generate instability. For a shear flow that the corresponding Rayleigh operator has no point spectrum, the Orr-Sommerfeld operator may still exhibit unstable eigenvalues. The associated eigenfunction is known as the Tollmien-Schlichting (T-S) wave. The theoretical foundation of T-S wave is established by Heisenberg \cite{heisenberg1924stabilitat}, Tollmien \cite{tollmien1930entstehung}, Schlichting \cite{schlichting1933}, and  Lin \cite{LinCC1955}. The existence of T-S wave is rigorously proved by Grenier-Guo-Nguyen \cite{GGN2016,GGN2016D}, see also \cite{YangZhang2023,MWWZ2024} for the compressible case. Another instability mechanism in shear flows is known as wave breakdown \cite{landahl1975wave,itoh1981secondary}. In this process, a primary disturbance can create a quasi-paralle unstable flow inside the boundary layer, which subsequently leads to the rapid growth of a high-frequency wavepacket. For mathematical insights on the related results, see \cite{bian2024instability,bian2024boundary}. The instabilities mentioned above are caused by the combined effects of viscosity and boundaries. However, the instability discussed in this paper does not originate from boundaries.

\subsection{Main results}The main conclusions of this paper are as follows. The first result is for $x\in\mathbb T$, where the wave number $k\in\mathbb Z$.
\begin{theorem}\label{thm}
  Given small constant $\gamma>0$, for any $\nu>0$, there exists a shear flow $b(t,y)$ satisfying \eqref{eq-heat}, along with two time points $0<\tilde T<T$, such that:
\begin{itemize}
    \item the Rayleigh operator $\mathcal R_{b(t),k}$ associated with the shear flow $b(t,y)$ has no point spectrum for any wave number $k\in\mathbb Z\setminus\{0\}$ during $t\in[0,\tilde T)$;
    \item at $t=\tilde T=\tilde{C}(\gamma)\nu^{-1}$ with $k=\pm1$, the Rayleigh operator $\mathcal R_{b(\tilde T),\pm1}$ has an embedded eigenvalue;
    \item for $t\in(\tilde T,T]$ with $k=\pm1$ and $T=C(\gamma)\nu^{-1}$, the Rayleigh operator $\mathcal R_{b(t),\pm1}$ has a unique unstable spectrum $c=ic_i(t)$ such that $c_i(t)>0$ and $c_i(T)=\gamma$.
\end{itemize}
\end{theorem}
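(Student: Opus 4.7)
We begin by eliminating the viscosity parameter via the rescaling $\tau = \nu t$, $B(\tau, y) = b(\tau/\nu, y)$, which transforms $\partial_t b = \nu \partial_{yy} b$ into $\partial_\tau B = \partial_{yy} B$. Because $\mathcal{R}_{b,k}$ depends only on the profile and the wave number, the spectral conditions are $\nu$-independent, and the prescribed times $\tilde T = \tilde C \nu^{-1}$ and $T = C\nu^{-1}$ correspond to $\tilde\tau, \tau_\ast = O(1)$. It thus suffices to construct a single heat trajectory $\tau \mapsto B(\tau, \cdot)$ whose Rayleigh spectrum at $k = \pm 1$ undergoes the transition \emph{no point spectrum $\to$ embedded real eigenvalue $\to$ unstable eigenvalue} at the rescaled times $\tilde\tau < \tau_\ast$.

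The construction hinges on a reference profile $B_\ast(y)$ such that $\mathcal{R}_{B_\ast, \pm 1}$ possesses exactly one embedded real eigenvalue $c_\ast = B_\ast(y_s)$ located at an inflection point $y_s$, with no other point spectrum for any $k \in \mathbb{Z} \setminus \{0\}$. Such a neutral mode solves the (non-singular, after cancelling the zero of $B_\ast - c_\ast$) Schr{\"o}dinger-type equation $\phi'' + \bigl(B_\ast''/(B_\ast - c_\ast) - 1\bigr)\phi = 0$ with $\phi \to 0$ at $\pm\infty$. Its existence, together with the suppression of further point spectrum, is arranged within a multi-parameter family of candidate profiles (parametrised e.g.\ by amplitude, width, and asymmetric corrections) using Sturm--Liouville oscillation arguments, perturbation from P{\"o}schl--Teller-type models (e.g.\ the classical $\tanh$ profile, whose neutral mode at $k=1$ is $\phi_\ast = \mathrm{sech}$), together with the Rayleigh--Fj{\o}rtoft--Howard necessary criteria to rule out $|k|\geq 2$ and other real $c$.

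Eigenvalue perturbation theory applied to the one-parameter family $B = B_\ast + \epsilon F$ gives, via the Fredholm alternative, an explicit pairing formula for $c'(0)$ involving $F$ and the adjoint neutral mode. Within the multi-parameter family above, $B_\ast$ is tuned so that the specific direction $F = B_\ast''$ — which is exactly the heat-flow tangent $\partial_\tau B|_{\tau = \tilde\tau}$ — satisfies $\mathrm{Im}\, c'(\tilde\tau) > 0$. The initial datum is then defined by $b_0 = e^{-\tilde\tau\, \partial_{yy}} B_\ast$, a backward heat problem that is well-posed because $B_\ast$ can be chosen analytic in a complex strip of width exceeding $\sqrt{\tilde\tau}$. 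The resulting trajectory passes through $B_\ast$ at time $\tilde\tau$ along the destabilising tangent direction.

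The implicit function theorem now yields a smooth branch $c(\tau) = c_\ast + c'(\tilde\tau)(\tau - \tilde\tau) + O\bigl((\tau - \tilde\tau)^2\bigr)$ near $\tilde\tau$, so that $c_i(\tau) > 0$ for $\tau \in (\tilde\tau, \tilde\tau + \delta]$ while the eigenvalue is absorbed into the continuous spectrum (i.e.\ leaves the point spectrum) for $\tau < \tilde\tau$. Continuity in $\tau$, together with Howard's semicircle bound confining any eigenvalue to a fixed region of $\mathbb{C}$, allows continuation of this branch up to $\tau = \tau_\ast$; standard non-accumulation arguments preserve the absence of point spectrum on $[0, \tilde\tau)$ and at wave numbers $|k| \geq 2$. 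A free scaling parameter in $B_\ast$ is finally tuned so that $c_i(\tau_\ast) = \gamma$, and undoing the rescaling produces the claimed $\tilde T = \tilde C(\gamma)\nu^{-1}$ and $T = C(\gamma)\nu^{-1}$. The main obstacle is the transversality in the third paragraph: the heat-flow direction $B_\ast''$ is prescribed, not free, so making it destabilising requires a careful choice of reference profile beyond classical one-inflection-point examples (whose heat-flow direction tends to be \emph{stabilising} at leading order, since heat flow broadens a shear layer and broadening stabilises $\tanh$-like profiles against $k = 1$).
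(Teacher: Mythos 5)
Your proposal correctly identifies the skeleton of the argument (rescaling away $\nu$, a neutral mode at $k=\pm1$ that must cross the real axis transversally under the heat flow, continuation of the eigenvalue branch), but the step you yourself flag as ``the main obstacle'' --- arranging that the heat-flow tangent direction is \emph{destabilising} --- is precisely the paper's central construction, and your proposal does not supply it. The paper resolves it with the explicit two-scale profile \eqref{eq-back}: a main component of width $\gamma_0$ that is spectrally unstable and evolves slowly, superposed with a small high-frequency component of width $\gamma_0\gamma_1$ that stabilises the flow at $t=0$ but dissipates on the much shorter time scale $\nu^{-1}\gamma_0^2\gamma_1^2$, so that diffusion is net destabilising. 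Note that this profile has a \emph{single} inflection point at $y=0$; the mechanism is two disparate length scales, not the departure from one-inflection-point examples that you suggest. Without an explicit profile (or at least a verifiable open condition on $B_*$) realising $\mathrm{Im}\,c'(\tilde\tau)>0$, the proof does not close.

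There is a second, independent gap: your scheme establishes the spectral transition only \emph{locally} near $\tilde\tau$ (via the Fredholm pairing and the implicit function theorem), whereas the theorem asserts global statements --- no point spectrum for \emph{all} $t\in[0,\tilde T)$ and \emph{all} $k\in\mathbb Z\setminus\{0\}$, and a \emph{unique} unstable eigenvalue for all $t\in(\tilde T,T]$. Defining the initial datum by backward heat flow from $B_*$ and appealing to ``non-accumulation'' does not control the spectrum away from $\tilde\tau$; in principle the branch could re-enter or additional eigenvalues could appear. The paper obtains the global statements from Lin's sharp characterisation for the $\mathcal K^+$ class (an unstable eigenvalue exists iff $0<|k|<k_*(t)$, no point spectrum for $|k|>k_*(t)$, and $L_{M,t}$ has at most one negative eigenvalue), combined with a proof that $\partial_t k_*(M,t)>0$ throughout $[0,T]$ --- a monotonicity statement that again requires the explicit profile. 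Relatedly, the Rayleigh--Fj\o rtoft--Howard criteria are $k$-independent necessary conditions and cannot rule out instability at $|k|\ge 2$; that exclusion comes from $k_*(t)<2$. Finally, achieving $c_i(T)=\gamma$ with $\gamma$ prescribed requires the quantitative Wronskian analysis ($\partial_k c_i\approx-\gamma_0$, hence $c_i(1)\approx\gamma_0\gamma_1\gamma_2$) of Section 3, not merely the existence of a tunable parameter.
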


This result reflects a instability driven by viscosity that is not caused by boundaries, suggesting that the spectral assumption (\textasteriskcentered) for $t\ge0$ in \cite{li2023asymptotic} is necessary for proving asymptotic stability. The stability of shear flow is more complex in viscous fluids than in inviscid fluids, as viscosity can destabilize a stable shear flow or stabilize an unstable one.

\begin{remark}[Embedded eigenvalue]
  In this paper, we define a shear flow as spectrally stable if the Rayleigh operator has only continuous spectrum, and spectrally unstable if it has an eigenvalue \(c\) such that \(c_i > 0\). The shear flow \(b(t,y)\) in Theorem \ref{thm} transitions from spectrally stability to spectrally instability. However, in cases where an embedded eigenvalue exists with \(c_i = 0\), there arises another type of instability due to the non-self-adjoint nature of the Rayleigh operator, which leads to growth in polynomial form. We will discuss this kind of instability in future work.
\end{remark}

\begin{remark}[Viscous instability]
  The spectral stability discussed in Theorem \ref{thm} pertains to the Rayleigh operator, and the parameter $\gamma$ is independent of viscosity $\nu$ (although it can also be defined to depend on $\nu$). However, when viscosity $\nu$ is very small, it has been shown in \cite{li2023asymptotic,beekie2024transition} that nonlinear enhanced dissipation occurs during the time interval when $b(t,y)$ remains spectrally stable. And by adding external force, asymptotic stability for $b(0,y)$ can be obtained as in \cite{CWZ2022,Jiahao2022}. For the unstable background flow $b(T,y)$, \cite{Grenier2000} shows that the spectral instability in the Euler system implies nonlinear instability in the Navier-Stokes system. Therefore, the conclusion of Theorem \ref{thm} is sufficient to demonstrate that the viscous fluid system transitions from stable to unstable. In other words, the system becomes more sensitive to perturbations. 
\end{remark}

\begin{remark}[Secondary instability]
  The instability phenomenon discussed in this paper is somewhat similar to the secondary instability process in 3D Couette flow. For 3D Couette flow, the lift-up effect evolves a spectrally stable Couette flow into a spectrally unstable streak flow, which subsequently leads to instability. Similarly, the instability discussed here involves the evolution of a spectrally stable shear flow into a spectrally unstable shear flow under the effect of viscosity, which then triggers instability.
\end{remark}

\begin{remark}[Nonlinear problem]
   For the shear flow constructed in Theorem \ref{thm}, we plan to explore  its nonlinear instability in future work. Specifically, we aim to prove that disturbances in the non-zero mode $\omega_{\neq}(t,y)$ can have significant growth in the nonlinear system:
   \begin{align}\label{eq-non-unstable}
    \sup_{t\ge0}\left\|\omega_{\neq}(t)\right\|_{L^2}\gg\left\|\omega_{\neq}(0)\right\|_{L^2}.
   \end{align}
   We can construct a shear flow such that the time duration for which the unstable spectrum persists is significantly longer than the time it first appear, and with only one unstable eigenvalue. This makes the nonlinear instability \eqref{eq-non-unstable} highly likely to occur. However, since the system undergoes significant changes as it transitions from stability to instability, rigorously proving this type of nonlinear instability remains a considerable challenge.
\end{remark}
\begin{remark}[Nonlinear problem with arbitrary small force]
   If we add an external force, even if it is arbitrarily small, we can easily see the nonlinear instability for the shear flow in Theorem \ref{thm}. Specifically, there exists $T_1<T$ such that $c_i(T_1)=\frac{1}{2}\gamma$. Since $\gamma$ is independent of $\nu$, the unstable eigenfunction $\phi_{1,\frac{1}{2}\gamma}$ for $k=1$ has good regularity. For sufficiently small $\nu$, $b(t,y)$ remains very close to $b(T_1,y)$ on $t\in[T_1,T_1+1]$. Then by adding the  force $f(t,x,y)=\nu^N\chi_{t\in[T_1,T_1+1]}\Delta\left(\phi_{1,\frac{1}{2}\gamma}(y)e^{ix}\right)$ on the right side of the vorticity system \eqref{eq:NS1},  with $N$ as large as desired, we can inject a small perturbation primarily in the form $\frac{\left(e^{\frac{\gamma}{2}}-1\right)\nu^N}{\gamma/2}\Delta\left(\phi_{1,\frac{1}{2}\gamma}(y)e^{ix}\right)$ into \eqref{eq:NS1} at $t=T_1+1$. From the theory of \cite{Grenier2000}, this arbitrarily small perturbation will generate a growth of size $\tilde \delta=O(1)$ that depends only on $b(T_1,y)$.
\end{remark}

The second result is for $x\in\mathbb R$, where the wave number $k\in\mathbb R$.
\begin{theorem}\label{thm2}
  Given small constant $k_0>0$, for any $\nu>0$, there exists a shear flow $b(t,y)$ satisfying \eqref{eq-heat}, such that the Rayleigh operator $\mathcal R_{b(0),k}$ of the initial state $b(0,y)$ has no point spectrum for any wave number $k\in\mathbb R$, and at some time $T$,  the Rayleigh operator $\mathcal R_{b(T),k}$ has a unstable spectrum for $0<|k|<k_0$. 
\end{theorem}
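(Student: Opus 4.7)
The plan is to deduce Theorem \ref{thm2} from Theorem \ref{thm} by a parabolic scaling argument, exploiting the fact that on $\mathbb{R}$ the wavenumber $k$ varies continuously rather than being quantized.

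First, I would apply the construction underlying Theorem \ref{thm} with normalized viscosity $\nu_0=1$ and some fixed admissible $\gamma$ to produce a reference shear flow $B(t,y)$ on $y\in\mathbb{R}$ solving $\partial_t B=\partial_y^2 B$, such that $B(0,\cdot)$ is spectrally stable (no point spectrum of $\mathcal{R}_{B(0),k}$) for every real $k\neq 0$, and such that at some time $T_0>0$ the operator $\mathcal{R}_{B(T_0),1}$ has an unstable eigenvalue $c=ic_i>0$. This requires revisiting the construction of Theorem \ref{thm} to verify that the stability of the initial profile is uniform over all $k\in\mathbb{R}\setminus\{0\}$, not just integer $k$; this should follow because the stability mechanism in that construction is controlled by structural features (monotonicity and inflection geometry) that do not single out integer wavenumbers.

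Next, for parameters $\lambda>0$ to be chosen, set
\[
  b(t,y)=B\bigl(\lambda^{2}\nu\, t,\;\lambda y\bigr).
\]
A direct computation shows that $b$ solves $\partial_t b-\nu\partial_y^2 b=0$, and the Rayleigh equation is scale-covariant: $(\psi,k,c)$ is an eigentriple of $\mathcal{R}_{b(t),k}$ if and only if $(\phi,k/\lambda,c)$ with $\psi(y)=\phi(\lambda y)$ is an eigentriple of $\mathcal{R}_{B(\lambda^{2}\nu t),\,k/\lambda}$. Consequently $b(0,\cdot)$ inherits the spectral stability of $B(0,\cdot)$ at every real $k\neq 0$, while at the rescaled time $T=T_0/(\lambda^{2}\nu)$ the operator $\mathcal{R}_{b(T),k}$ acquires an unstable eigenvalue whenever $k/\lambda$ lies in the unstable wavenumber range of $B(T_0,\cdot)$.

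To reach the whole interval $0<|k|<k_0$, I would couple this scaling with a long-wave continuation of the unstable branch. By analyticity of the Rayleigh operator in $k$, the eigenvalue $c(k)$ of $\mathcal{R}_{B(T_0),k}$ extends off $k=1$ as an analytic function; a long-wave perturbation analysis of
\[
  (B(T_0)-c)(\phi''-k^{2}\phi)-B''(T_0)\,\phi=0
\]
as $k\to 0^+$, treating $-k^{2}(B-c)\phi$ as a small correction to the singular limit and using the Rayleigh--Fj\o{}rtoft structure of $B(T_0)$ built in the proof of Theorem \ref{thm}, should show that the branch persists with $c_i(k)>0$ on an interval $(0,k_{*})$ with $k_{*}\ge 1$. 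Choosing $\lambda=k_0/k_{*}$ then converts this into an unstable interval $(0,k_0)$ for $\mathcal{R}_{b(T),k}$, completing the proof. The main obstacle is precisely this long-wave extension: Theorem \ref{thm} only delivers instability at the isolated wavenumber $k=1$, so ruling out a finite lower cutoff in the unstable range requires a careful small-$k$ asymptotic analysis tailored to the specific profile $B(T_0,y)$, possibly via a shooting argument or a matched expansion combined with the explicit inflection structure of the constructed flow.
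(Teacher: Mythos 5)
There is a genuine gap, and it sits exactly at the step you flagged as "should follow": the claim that the Theorem \ref{thm} construction (or any parabolic rescaling of it) is spectrally stable at $t=0$ for \emph{all} real $k\neq 0$ is false. In that construction the operator $L_{M,0}$ has a negative ground-state eigenvalue $-k_*^2(M,0)$ with $k_*(M,0)=1-\tfrac{1}{C}\gamma_1\gamma_2>0$, and by Lin's Theorem 1.5 for class $\mathcal K^+$ flows (Remark \ref{rmk-unstable} in the paper) this forces a purely imaginary \emph{unstable} eigenvalue of $\mathcal R_{B(0),k}$ for every real $k$ with $0<|k|<k_*(M,0)$. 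On the torus this band is harmless because it contains no nonzero integer; on $\mathbb R$ it is fatal, since the initial data is already unstable for an entire interval of long waves. Your scaling $b(t,y)=B(\lambda^2\nu t,\lambda y)$ is correctly computed (the heat equation and the Rayleigh problem are indeed scale-covariant in the way you state), but it merely maps the unstable band $(0,k_*(0))$ to $(0,\lambda k_*(0))$; it cannot empty it. Incidentally, the "main obstacle" you identify at the end — extending the unstable branch down to $k\to 0^+$ — is not an obstacle at all: the same theorem of Lin gives instability on the full interval $0<|k|<k_*$ with no lower cutoff. The real obstacle is the one above, at $t=0$.

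The paper's route is different in precisely this respect: it tunes the amplitude $M$ down to the critical value $M_0$ at which $\min_{\|\phi\|_{L^2}=1}\langle L_{M_0,0}\phi,\phi\rangle=0$, so that the unstable band at $t=0$ is empty, and it separately checks that no neutral mode exists at $k=0$ by showing that neither fundamental solution $\phi_0^A=b_{M}$ nor $\phi_0^B$ (nor any combination) decays at both ends. The price of sitting exactly at threshold is that there is no initial neutral mode $\phi_{k_*(M_0,0)}$ to use as a test function for tracking the evolution of the bottom of the spectrum; the paper substitutes the neutral mode $\phi_{k_*(M_0+\delta_1,0)}$ of a slightly supercritical flow, quantifies $k_*(M_0+\delta_1,0)\approx\delta_1$, and then shows diffusion drives $\langle L_{M_0,T}\phi_{k_*(M_0+\delta_1,0)},\phi_{k_*(M_0+\delta_1,0)}\rangle\approx-\gamma_1\gamma_2\delta_1<0$, yielding a positive critical wave number $k_0=\tfrac{1}{C}\gamma_1\gamma_2$ at time $T$. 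If you want to salvage your approach, you would need to replace "rescale Theorem \ref{thm}" by "tune $M$ to criticality," at which point you are essentially reconstructing Proposition \ref{lem-neutral2}.
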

\begin{remark}[Bounded monotonic shear flow]
  In the proofs of Theorems \ref{thm} and \ref{thm2}, we construct a uniformly strictly monotonic shear flow (see \eqref{eq-back} below) to demonstrate the destabilizing effect of viscosity. For this constructed shear flow, viscosity induces significant variations near the origin $|y|\le \frac{1}{C}$, and these locally variations affect the stability of the Rayleigh operator. Using the same idea, we can also construct a uniformly bounded shear flow $\left|b(t,y)\right|\le C$, for which similar conclusions to those in Theorems \ref{thm} and \ref{thm2} hold.
\end{remark}
\subsection{Main ideas}
We adopt a constructive proof. The shear flow we use is in the flowing form:
\begin{align}\label{eq-back}
  b_{M,\gamma_0,\gamma_1,\gamma_2}(t,y)=y+M \left(\int_{0}^y \frac{\gamma^2_0}{\sqrt{4\nu t+\gamma^2_0}}e^{-\frac{z^2}{4\nu t+\gamma^2_0}} dz -\gamma_2\int_{0}^y \frac{\gamma^2_0\gamma^3_1}{\sqrt{4\nu t+\gamma^2_0\gamma^2_1}}e^{-\frac{z^2}{4\nu t+\gamma^2_0\gamma^2_1}} dz\right),
\end{align}
which satisfies \eqref{eq-heat}. Here $\gamma_0,\gamma_1,\gamma_2>0$ are small constants, where $\gamma_1\ll\gamma_2$, and $M\approx 1$ will be determined in the proof, which can be chosen independent of the viscosity $\nu$.

The mechanism driving the instability in the background flow is as follows:

The main component of the shear flow, given by
\begin{align*}
  b_{main}(t,y)=y+M  \int_{0}^y \frac{\gamma^2_0}{\sqrt{4\nu t+\gamma^2_0}}e^{-\frac{z^2}{4\nu t+\gamma^2_0}} dz ,
\end{align*}
is initially spectrally unstable and evolves slowly over time by choosing suitable $M$ and $\gamma_0$. We find that adding a high-frequency part
\begin{align*}
  b_{high}(t,y)= -M\gamma_2\int_{0}^y \frac{\gamma^2_0\gamma^3_1}{\sqrt{4\nu t+\gamma^2_0\gamma^2_1}}e^{-\frac{z^2}{4\nu t+\gamma^2_0\gamma^2_1}} dz, 
\end{align*}
can make the overall shear flow $b_{M,\gamma_0,\gamma_1,\gamma_2}(t,y)$ spectrally stable at $t=0$. The high-frequency part $b_{high}(t,y)$ is particularly sensitive to the heat diffusion effect, leading it to dissipate significantly for $t\ge \nu^{-1}\gamma^2_0\gamma^2_1$. Consequently, this dissipation allows the shear flow to revert to the unstable state represented by $b_{main}(t,y)$. 

To prove Theorem \ref{thm}, we introduce the critical wave number $k_*(t)$ and the normalized neutral mode $\phi_{k_*}$. Specifically, at each time $t$, the Rayleigh operator associated with $b_{M,\gamma_0,\gamma_1,\gamma_2}(t,y)$ has an embedded eigenvalue ($c_i=0$) for a positive wave number $k_*(t)\in\mathbb R^+$, which need not be an integer. We refer to the corresponding normalized eigenfunction $\phi_{k_*}$ as the neutral mode. It holds that
\begin{align*}
  -k_*^2(t)=\min_{\substack{\left\|\phi\right\|_{L^2}=1,\\\phi\in H^1}}\left(\left\|\phi'\right\|_{L^2}^2+\int_{\mathbb R}\frac{b_{M,\gamma_0,\gamma_1,\gamma_2}''(t,y)}{b_{M,\gamma_0,\gamma_1,\gamma_2}(t,y)} \phi^2 dy \right),
\end{align*}
with equality holding precisely when $\phi=\phi_{k_*}$. 

We reduce the spectral stability problem of the shear flow $b_{M,\gamma_0,\gamma_1,\gamma_2}(t,y)$ to studying the evolution of the critical wave number $k_*(t)$ over time. By adjusting suitable parameters, we obtain a shear flow $b_{M,\gamma_0,\gamma_1,\gamma_2}(t,y)$ where the critical wave number satisfies $k_*(0)<1$, and the final critical wave number satisfies $k_*(T)>1$. According to the results in \cite{lin2003instability}, this implies that the Rayleigh operator for the initial state $b_{M,\gamma_0,\gamma_1,\gamma_2}(0,y)$ has only a continuous spectrum for $k\in\mathbb Z\setminus\{0\}$, while for the final state $b_{M,\gamma_0,\gamma_1,\gamma_2}(T,y)$, the Rayleigh operator has an unstable eigenvalue for $k=1$. 

To verify how the critical wave number evolves over time, we derive the precise profile of the neutral mode $\phi_{k_*}$, which is a even, positive, bell shaped function without zeros. As time goes on, the function $\left|\frac{b_{M,\gamma_0,\gamma_1,\gamma_2}''(t,y)}{b_{M,\gamma_0,\gamma_1,\gamma_2}(t,y)}\right|$ exhibits significant growth near the origin for $|y|\le \gamma_0\gamma_1$. This growth, through the initial neutral mode $\phi_{k_*(0)}$, leads to the increase in $k_*(t)$. We select an appropriate $M$ so that the critical wave number $k_*(t)$ evolves from an initial value $k_*(0)=1-\frac{1}{C}\gamma_1\gamma_2$ to a final value $k_*(T)=1+\frac{1}{C}\gamma_1\gamma_2$. 

The approach to proving Theorem \ref{thm2} is similar. In this case, at initial time, the shear flow is spectrally stable for all the $k\in\mathbb R$; hence, there is no critical wave number and neutral mode at beginning. The difficulty here lies in finding an appropriate function to replace $\phi_{k_*(0)}$ used in the proof of Theorem \ref{thm}, one that can reflects the effects of the evolution of $\left|\frac{b_{M,\gamma_0,\gamma_1,\gamma_2}''(t,y)}{b_{M,\gamma_0,\gamma_1,\gamma_2}(t,y)}\right|$. We construct a special function $\tilde \phi$, which is the neutral mode of a shear flow that is close to $b_{M,\gamma_0,\gamma_1,\gamma_2}(0,y)$. Then, through $\tilde \phi$, we see the emergence of a critical wave number $k_*(t)>0$ and track its evolution over time. The details of this part are provided in Section 2. 

In Section 3, we provide a quantitative estimate of the unstable eigenvalue for the shear flow $b_{M,\gamma_0,\gamma_1,\gamma_2}(T,y)$ in the case of the torus $(x\in\mathbb T)$.  Since the constructed shear flow is odd, the unstable eigenvalue is purely imaginary.  We analyze the Wronskian $W(c,k)$ of the Rayleigh equation, where its zeros correspond to the eigenvalues of the Rayleigh operator. We derive the dependence of the eigenvalue on the wave number, expressed as $c(k)=ic_i(k)$, which satisfies $c_i(k_*(T))=0$. We prove that $\pa_k c_i(k)\approx-\gamma_0$, leading to the conclusion that  $c_i(1)\approx \gamma_0\gamma_1\gamma_2$. This estimate of the eigenvalue lays the groundwork for proving nonlinear instability. 

\no{\bf Notations}: Through this paper, we will use the following notations. 

We use $C$ to denote a positive big enough constant that may be different from line to line, and use $f\lesssim g$ ($f\approx g$) to denote
\begin{align*}
  f\le C g\quad (\frac{1}{C}g\le f\le C g).
\end{align*}
We use $f\ll g$ to indicate $f\le \frac{1}{C}g$ for a sufficient big $C$.

 Given a function $f(t,y)$, we denote its spatial derivation with respect to $y$ as
\begin{align*}
  f'(t,y)=\pa_yf(t,y),\qquad f''(t,y)=\pa_y^2f(t,y).
\end{align*}

\section{Stability transition of the shear flow}
In this section, we show that by selecting appropriate $(M,\gamma_0,\gamma_1,\gamma_2)$, the odd shear flow $b_{M,\gamma_0,\gamma_1,\gamma_2}(t,y)$ given in \eqref{eq-back} evolves from  spectrally stable to spectrally unstable.

It follows from \eqref{eq-back} that
\begin{align*}
  b''_{M,\gamma_0,\gamma_1,\gamma_2}(t,y)=-2y\gamma^2_0 M\left(\frac{1}{(4\nu t+\gamma^2_0)^{\frac{3}{2}}}e^{-\frac{y^2}{4\nu t+\gamma^2_0}}-\gamma_2\frac{\gamma^3_1}{(4\nu t+\gamma^2_0\gamma^2_1)^{\frac{3}{2}}}e^{-\frac{y^2}{4\nu t+\gamma^2_0\gamma^2_1}}\right).
\end{align*}
As all $\gamma_0,\gamma_1$ are small constants, $\gamma_2<1$, it is evident that $b_{M,\gamma_0,\gamma_1,\gamma_2}(t,y)>0$ and $b''_{M,\gamma_0,\gamma_1,\gamma_2}(t,y)<0$ for $y>0$. Therefore, $b_{M,\gamma_0,\gamma_1,\gamma_2}(t,y)$ has only one inflection point $y=0$. Furthermore,
\begin{align*}
  -\frac{b''_{M,\gamma_0,\gamma_1,\gamma_2}(t,y)}{b_{M,\gamma_0,\gamma_1,\gamma_2}(t,y)}>0.
\end{align*}
Thus the shear flow we considering also belongs to $\mathcal K^+$ class studied in \cite{lin2003instability}.

 In the rest of this section, we simplify the notation of $b_{M,\gamma_0,\gamma_1,\gamma_2}(t,y)$ to $b_{M}(t,y)$, focusing on its dependence on on $M$ and $t$, unless otherwise stated.

We first fix $t$, and study the Rayleigh equation for each shear flow $b_{M}(t,y)$. In the Rayleigh equation \eqref{eq-Rayleigh}, the eigenvalue $c$ such that $c_i=0$ is known as the embedded eigenvalue. Since $b_{M}(t,y)$ has only one inflection point $y=0$, it follows from the Rayleigh technique that the embedded eigenvalue can only occur at the  inflection point, see \cite{WeiZhangZhao2018} and also Lemma 5.5 of \cite{LiMasmoudiZhao2022critical}. Thus the embedded eigenvalue could only be $c=0$. If $(\phi_k,k)$ is a solution to the Sturm-Liouville problem given by
\begin{align}
  &\phi''_k-\frac{b_{M}''}{b_{M}}\phi_k=k^2\phi_k,\label{eq-SL}\\
  & \phi_k(y)\to0 \text{ as } y\to\pm\infty,\label{eq-SL-bc}
\end{align}
then $(\phi_k,k)$ is a special solution (often referred to as a neutral mode) to the Rayleigh equation \eqref{eq-Rayleigh} with $c=0$. It is clear that if $(\phi_k,k)$ is a neutral mode, then $(\phi_k,-k)$ is also a neutral mode.  Without loss of generality, and unless otherwise specified,  we always assume $k>0$ through this paper.

By our setting, we can see that $\frac{b''_{M}(t,y)}{b_{M}(t,y)}$ is a compact perturbation of $\frac{d^2}{dy^2}$, so the operator
\begin{align}
  L_{M,t}=-\frac{d^2}{dy^2}+\frac{b''_{M}(t,y)}{b_{M}(t,y)}
\end{align}
has the same essential spectrum as $-\frac{d^2}{dy^2}$. The first negative eigenvalue of $L_{M,t}$ satisfies
\begin{align*}
  \lambda_{M,t}=\min_{\substack{\left\|\phi\right\|_{L^2}=1,\\\phi\in H^1}} \left\langle L_{M,t}\phi,\phi \right\rangle=\min_{\substack{\left\|\phi\right\|_{L^2}=1,\\\phi\in H^1}}\left(\left\|\phi'\right\|_{L^2}^2+\int_{\mathbb R}\frac{b''_{M}(t,y)}{b_{M}(t,y)} \phi^2 dy \right).
\end{align*}
For $\lambda_{M,t}=-k_*^2(M,t)<0$, we denote the corresponding eigenfunction by $\phi_{k_*(M,t)}$ which satisfies $\left\langle L_{M,t}\phi_{k_*(M,t)},\phi_{k_*(M,t)} \right\rangle=\lambda_{M,t}=-k_*^2(M,t)$ is a neutral mode. We call $k_*(M,t)$ the critical wave number.

\begin{remark}\label{rmk-unstable}
  For shear flow $b_{M}(t,y)$, if $\min\limits_{\substack{\left\|\phi\right\|_{L^2}=1,\\\phi\in H^1}} \left\langle L_{M,t}\phi,\phi \right\rangle=\lambda_{M,t}$, there exists neutral mode with $k_*(M,t)=\sqrt{-\lambda_{M,t}}$, then the corresponding Rayleigh operator $\mathcal R_{b_{M}(t,y),k}$ has a purely imaginary unstable eigenvalue for $0<|k|<k_*$. And $\mathcal R_{b_{M}(t,y),k}$ has no point spectrum for $|k|>\sqrt{-\lambda_{M,t}}$.
\end{remark}
The first part of this remark follows directly from Theorem 1.5 of \cite{lin2003instability}, and the second part is easy to check by using the technical in Lemma 3.4 and Remark 3.8 of \cite{lin2003instability}. 

\begin{lemma}\label{lem-nozero}
  For the shear flow $b_{M}(t,y)$, the operator $L_{M,t}$ can have at most one negative eigenvalue, and the corresponding eigenfunction has no zeros.
\end{lemma}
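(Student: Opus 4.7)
The plan is to exploit two structural features of $L_{M,t}$: the fact that $b_M(t,\cdot)$ is a (non-$L^2$) zero mode, and the evenness of the potential $V(y):=b_M''(t,y)/b_M(t,y)$ induced by the oddness of $b_M$. First I would verify that any $H^1$-eigenfunction $\phi$ with eigenvalue $\lambda<0$ is $L^2$-orthogonal to $b_M$. Since $V=b_M''/b_M$ one has $L_{M,t}b_M = 0$ identically, so integrating by parts twice on $(-R,R)$ gives an identity of the form $\lambda\langle\phi,b_M\rangle = \langle L_{M,t}\phi,b_M\rangle - \langle\phi,L_{M,t}b_M\rangle + [\phi' b_M - \phi\, b_M']_{-R}^{R}$. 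Because $V\to 0$ super-exponentially at infinity, a standard Agmon-type argument gives exponential decay of $\phi$ and $\phi'$, while $b_M$ and $b_M'$ grow at most linearly, so the boundary terms vanish as $R\to\infty$; since $\lambda\neq 0$, this yields $\langle\phi,b_M\rangle = 0$.

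Next I would use parity. Since $V$ is even, $L_{M,t}$ preserves the even/odd decomposition of $L^2(\mathbb R)$, and since negative eigenvalues of a one-dimensional Schr\"odinger operator on $\mathbb R$ are simple (any two $L^2$-solutions of a second-order ODE at the same $\lambda$ must be proportional), each negative-eigenvalue eigenfunction has definite parity. By the Sturm oscillation theorem the lowest eigenfunction $\phi_1$ has no zeros; this forces $\phi_1$ to be even, since an odd function would vanish at the origin. This already gives the ``no zeros'' part of the lemma.

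To rule out a second negative eigenvalue $\lambda_2$, I would apply Sturm oscillation once more: the eigenfunction $\phi_2$ must possess exactly one zero. Parity then forces $\phi_2$ to be odd with that zero at the origin, because an even function whose sole zero is at $0$ would also have $\phi_2'(0)=0$ and hence vanish identically by ODE uniqueness, while an even function vanishing at some $y_0\neq 0$ would have at least two zeros. Once $\phi_2$ is odd, $\phi_2 b_M$ is even and strictly single-signed on $(0,\infty)$ (since each factor is single-signed there), hence single-signed on all of $\mathbb R\setminus\{0\}$, which contradicts $\langle\phi_2,b_M\rangle=0$. I expect the only delicate point to be the justification of the orthogonality identity: $b_M$ is not in $L^2$, so one must use the exponential decay of $\phi$ to dominate the linear growth of $b_M$ at infinity; once this is quantified, the remainder of the argument reduces to parity bookkeeping.
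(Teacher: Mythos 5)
Your argument is correct, but it follows a genuinely different route from the paper. The paper's proof is a direct Rayleigh-type identity: for a real eigenfunction $\phi_k$ of a negative eigenvalue vanishing at some $y_0\le 0$, multiplying the equation by $\phi_k$, integrating over $(-\infty,y_0]$ and completing the square with the logarithmic derivative $b_M'/b_M$ yields
\begin{align*}
\int_{-\infty}^{y_0}\Bigl|\phi_k'-\frac{b_M'}{b_M}\phi_k\Bigr|^2dy+k^2\int_{-\infty}^{y_0}|\phi_k|^2dy=0,
\end{align*}
forcing $\phi_k\equiv 0$; nodelessness of \emph{every} negative-eigenvalue eigenfunction then rules out a second negative eigenvalue via Sturm--Liouville oscillation (the paper cites Zettl for the same theorem you invoke). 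You instead exploit the explicit zero mode $L_{M,t}b_M=0$ to get orthogonality $\langle\phi,b_M\rangle=0$, and combine parity with oscillation theory to derive a contradiction for a putative second eigenfunction. Both proofs ultimately rest on the same structural fact (the potential is $b_M''/b_M$, equivalently $b_M$ lies in the kernel of $L_{M,t}$), but they package it differently: the paper's completion of the square is local and does not use the oddness of $b_M$, so it transfers to non-symmetric flows in the $\mathcal K^+$ class, whereas your parity bookkeeping is tied to the evenness of the potential. In exchange, your route avoids the mildly delicate integration by parts with the singular weight $b_M'/b_M$ near the zero of $b_M$ (the reason the paper restricts the putative zero to $(-\infty,0]$), replacing it with the cleaner, if more global, orthogonality identity; the only point you must genuinely quantify — exponential (Agmon) decay of $\phi$ and $\phi'$ against the linear growth of $b_M$ — is standard here since the potential decays super-exponentially. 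I see no gap.
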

\begin{proof}
  First, we show that all the neutral mode $\phi_{k}$ solves \eqref{eq-SL} has no zeros by using the Rayleigh technique. As all the coefficients in \eqref{eq-SL} are real, we can assume that $\phi_{k}$ is real. If there is a $y_0\in(-\infty,0]$ such that $\phi_k(y_0)=0$, then we have
\begin{align*}
  -\int_{-\infty}^{y_0}|\phi_k|^2 dy-k^2 \int_{-\infty}^{y_0} \left|\phi_k\right|^2dy-\int_{-\infty}^{y_0} \frac{b''_{M}(t,y)}{b_{M}(t,y)}\left|\phi_k\right|^2  dy=0.
\end{align*}
Using integration by parts for the last term on the left side gives
\begin{align*}
  -\int_{-\infty}^{y_0} \left|\phi_k'-\frac{b'_{M}(t,y)}{b_{M}(t,y)} \phi_k\right|^2dy-k^2\int_{-\infty}^{y_0} \left|\phi_k\right|^2dy=0,
\end{align*}
which implies that $\phi_k=0$ on $(-\infty,y_0]$, and thus $\phi_k\equiv0$. If $L_{M,t}$ has at least two negative eigenvalue, then by the Sturm-Liouville theory (see, for example Theorem 10.12.1 in \cite{zettl2005sturm}),  the eigenfunction $\phi_k$ for second negative eigenvalue has one zero point, which contradicts with the fact that $\phi_k$ has no zero. 
\end{proof}

Now, we reduce the spectral stability problem of the shear flow $b_{M}(t,y)$ to studying the evolution of the critical wave number $k_*(M,t)$ over time. Our aim is to prove the following proposition, which is a key step for Theorem \ref{thm}.
\begin{proposition}\label{lem-neutral}
  Given small constants $0<\gamma_0,\gamma_1,\gamma_2$, with $\gamma_1\ll\gamma_2$, there exists suitable $M$ such that the critical wave number of shear flow $b_{M}(t,y)$ satisfies
  \begin{align*}
    k_*(M,0)=1-\frac{1}{C}\gamma_1\gamma_2,\ \pa_tk_*(M,t)>0 \text{ for }t\in[0,T],\ k_*(M,T)=1+\frac{1}{C}\gamma_1\gamma_2, 
  \end{align*}
  where $T=\nu^{-1}\gamma_0^2\gamma_1^2$. Acoordingly, there exists $\tilde T\in(0,T)$ such that $k_*(M,\tilde T)=1$.
\end{proposition}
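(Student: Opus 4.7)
The plan is to exploit the variational principle $-k_{*}^{2}(M,t)=\inf_{\|\phi\|_{L^{2}}=1}\langle L_{M,t}\phi,\phi\rangle$ and to analyze $k_{*}(M,t)$ as a function of the two parameters $M$ and $t$ separately. The road map is: (a) choose $M$ so that $k_{*}(M,0)=1-(1/C)\gamma_{1}\gamma_{2}$ by a perturbation argument in $\gamma_{2}$; (b) prove $\partial_{t}k_{*}(M,t)>0$ on $[0,T]$, so that $t\mapsto k_{*}(M,t)$ is strictly increasing and continuous; (c) evaluate $k_{*}(M,T)$ and use the intermediate value theorem to locate $\tilde T$.

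For (a) I would first set $\gamma_{2}=0$. Writing $b_{M}=b_{\mathrm{main}}+\gamma_{2}b_{\mathrm{high}}$ and rescaling $y=\gamma_{0}\eta$ at $t=0$ reduces the unperturbed problem to a Sturm--Liouville eigenvalue problem depending only on the scalar $M$; strict monotonicity in $M$ of the lowest eigenvalue (an instance of the Hellmann--Feynman identity applied to the $M$-derivative of $b_{M}''/b_{M}$) together with continuity lets me pick a reference value $M^{(0)}$ with $k_{*}^{(0)}(M^{(0)},0)=1$. Turning on $\gamma_{2}$, first-order perturbation theory gives
\[
k_{*}(M^{(0)},0)-1=-\frac{\gamma_{2}}{2}\int_{\mathbb{R}}\left[\frac{b_{\mathrm{high}}''(0,y)}{b_{\mathrm{main}}(0,y)}-\frac{b_{\mathrm{high}}(0,y)\,b_{\mathrm{main}}''(0,y)}{b_{\mathrm{main}}^{2}(0,y)}\right]\phi_{1}^{2}(y)\,dy+O(\gamma_{2}^{2}),
\]
where $\phi_{1}$ is the unperturbed normalized neutral mode. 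Using the explicit Gaussians, with $b_{\mathrm{high}}''$ odd and concentrated on scale $\gamma_{0}\gamma_{1}$ and the near-origin expansion $b_{\mathrm{main}}(0,y)\approx(1+M)y$, the first bracket term is a positive bell of mass of order $\gamma_{1}$ and the second term is lower order in $\gamma_{1}$. Thus the correction has stabilizing sign and size $\gamma_{1}\gamma_{2}$. A small IVT adjustment of $M$ near $M^{(0)}$ then pins down $k_{*}(M,0)=1-(1/C)\gamma_{1}\gamma_{2}$ precisely.

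Step (b) is the main obstacle, since $\partial_{t}V$ with $V=b_{M}''/b_{M}$ is not pointwise signed. Using Hellmann--Feynman and the heat equation $\partial_{t}b_{M}=\nu b_{M}''$ gives
\[
\partial_{t}\lambda_{M,t}=\nu\int_{\mathbb{R}}\left(\frac{\partial_{y}^{4}b_{M}}{b_{M}}-V^{2}\right)\phi_{k_{*}}^{2}\,dy.
\]
I would decompose $V$ into its slow ``main'' piece (characteristic time $\gamma_{0}^{2}/\nu$) and its fast ``high-frequency'' piece (characteristic time $T=\gamma_{0}^{2}\gamma_{1}^{2}/\nu$). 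On $[0,T]$ the main piece evolves only by relative $O(\gamma_{1}^{2})$ and contributes negligibly. The high-frequency piece is a positive correction to $b_{M}''$; as it dissipates it increases $|V|$ on the narrow window $|y|\lesssim\sqrt{4\nu t+\gamma_{0}^{2}\gamma_{1}^{2}}$, and because the bell-shaped, nowhere-zero neutral mode from Lemma~\ref{lem-nozero} is bounded below by a positive constant near the origin and varies on the bulk scale $\sim 1/k_{*}\approx 1$, the weighted integral is dominated by this negative near-origin contribution. The delicate point is to show that the positive pointwise contributions to $\partial_{t}V$ coming from the Gaussian widening are beaten by the leading $O(\gamma_{1}\gamma_{2})$ amplitude-decay piece; a direct computation on the explicit profiles yields this.

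For (c), repeating the perturbation argument at $t=T$, where the widths $4\nu T+\gamma_{0}^{2}=\gamma_{0}^{2}(1+4\gamma_{1}^{2})$ and $4\nu T+\gamma_{0}^{2}\gamma_{1}^{2}=5\gamma_{0}^{2}\gamma_{1}^{2}$ have been updated by the heat flow, shows that the stabilizing $\gamma_{2}$-correction to $\lambda_{M,T}$ has shrunk (in magnitude) by a factor $\sim 1/\sqrt{5}$ compared with the value at $t=0$, so $k_{*}(M,T)=1+c_{T}\gamma_{1}\gamma_{2}+O((\gamma_{1}\gamma_{2})^{2})$ with $c_{T}>0$; choosing the constant $C$ consistently gives the stated value $1+(1/C)\gamma_{1}\gamma_{2}$. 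The intermediate value theorem applied to the continuous strictly increasing map $t\mapsto k_{*}(M,t)$ on $[0,T]$, together with $k_{*}(M,0)<1<k_{*}(M,T)$, produces a unique $\tilde T\in(0,T)$ with $k_{*}(M,\tilde T)=1$, completing the proof.
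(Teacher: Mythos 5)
Your skeleton is essentially the paper's: the variational characterization $-k_*^2(M,t)=\min_{\|\phi\|_{L^2}=1}\langle L_{M,t}\phi,\phi\rangle$, the splitting of $b_M''/b_M$ into a slowly varying main piece and a rapidly dissipating high-frequency piece, and the use of the zero-free, bell-shaped neutral mode (Lemmas \ref{lem-nozero} and \ref{lem-profile-phi}) as a weight that converts the near-origin deepening of the potential into a definite decrease of $\lambda_{M,t}$. The implementation differs in two places. For the time evolution the paper works with finite differences and the one-sided variational inequality, testing $L_{M,t_2}$ against $\phi_{k_*(M,t_1)}$ and then swapping the roles of $t_1,t_2$; this yields the two-sided control and the monotonicity $\pa_t k_*>0$ without having to justify differentiability of the eigenvalue branch, which your Hellmann--Feynman identity implicitly assumes. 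For the endpoint values the paper does not expand in $\gamma_2$ at all: it first proves the uniform drift estimate \eqref{eq-est-0-T} for all $M$ with $k_*(M,0)\in[\tfrac12,2]$, with a uniform constant $C_1$, and only then uses the monotone dependence of $\lambda_{M,0}$ on $M$ (Lemma \ref{lem-M}) to place $\lambda_{M,0}=-1+\tfrac{C_1}{2}\gamma_1\gamma_2$, so that the same constant automatically governs both endpoints.

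Two points in your write-up are genuine gaps. First, the sentence ``a direct computation on the explicit profiles yields this'' defers exactly the central lemma of the proposition. What must be shown is that for $h_1(t,y)=\gamma_2\frac{\gamma_1^3}{(4\nu t+\gamma_0^2\gamma_1^2)^{3/2}}e^{-y^2/(4\nu t+\gamma_0^2\gamma_1^2)}-\gamma_2\gamma_0^{-3}e^{-y^2/(\gamma_0^2\gamma_1^2)}$, whose unique positive zero is $\tilde y\approx\gamma_0\gamma_1$, the negative mass on $[-\tilde y,\tilde y]$ exceeds the positive tail mass by a definite factor $1+\tfrac1C$ (the paper's \eqref{eq-neg-part}). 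This margin is not cosmetic: the prefactor $2y\gamma_0^2M/b_M(t,y)$ is smaller near the origin (by $1/(1+M\gamma_0)$) than in the tail, so knowing only that $\int_{\mathbb R}h_1<0$ does not sign the weighted integral. One must also verify that the main-piece and denominator contributions ($B_2,B_3$ in the paper) are $O(\nu t/\gamma_0^2)$, which is where the hypothesis $\gamma_1\ll\gamma_2$ is actually used. Second, your ``first-order perturbation theory $+O(\gamma_2^2)$'' in step (a) is not justified as stated: the perturbing potential $b_{\mathrm{high}}''/b_{\mathrm{main}}$ has $L^\infty$ size $\sim\gamma_2/\gamma_0$, which is large, and is only small in $L^1$ (size $\gamma_1\gamma_2$); controlling the remainder therefore requires an $L^1$/form-boundedness argument rather than norm-smallness. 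The paper's IVT-in-$M$ route avoids this issue entirely, and your step (c) (re-expanding at $t=T$) inherits the same difficulty, whereas the accumulated drift estimate makes it unnecessary.
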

\begin{remark}
  In the proof of Proposition \ref{lem-neutral}, we actually do not require $\gamma_2$ to be small; it is sufficient to take $\gamma_2<1$, such as $\gamma_2=\frac{1}{2}$. However, in the subsequent proof of Proposition \ref{lem-quanti}, due to certain technical reasons, we need $\gamma_2$ to be sufficiently small.
\end{remark}

The proof of Theorem \ref{thm2} is reduced to the following proposition.
\begin{proposition}\label{lem-neutral2}
  Given constants $0<\gamma_0,\gamma_1\ll1$, and $0<\gamma_2<\frac{1}{2}$, there exists suitable $M$ such that the shear flow $b_{M}(t,y)$ satisfies: at initial time, $\min_{\substack{\left\|\phi\right\|_{L^2}=1,\\\phi\in H^1}} \left\langle L_{M,0}\phi,\phi \right\rangle=0$, and the Rayleigh operator $\mathcal R_{b_{M}(0,y),k}$ has no neutral mode for any $k\in\mathbb R$; there exists $\tilde T$ such that $\mathcal R_{b_{M}(t,y),k}$ has positive critial wave number $k_*(M,t)>0$ with $\pa_tk_*(M,t)>0$ on $t\in(\tilde T,T]$, where $T=\nu^{-1}\gamma_0^2\gamma_1^2$; $k_*(M,T)=\frac{1}{C}\gamma_1\gamma_2$.
\end{proposition}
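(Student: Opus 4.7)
The overall strategy parallels Proposition~\ref{lem-neutral}: we select $M$ via a one-parameter continuity argument so that $L_{M,0}$ sits exactly at the threshold for admitting a negative eigenvalue, then we witness the emergence of $k_*(M,t)>0$ through a test-function argument that uses a nearby neutral mode $\tilde\phi$ in place of the unavailable $\phi_{k_*(M,0)}$, and finally we upgrade pointwise information to monotonicity and quantitative control at $t=T$ via Hellmann--Feynman and the explicit Gaussian structure of \eqref{eq-back}.

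For the first step, holding $\gamma_0,\gamma_1,\gamma_2$ fixed, I view $\lambda_{M,0}$ as a continuous, non-increasing function of $M$. As $M$ varies the potential $b_M''(0,y)/b_M(0,y)$ changes monotonically, and one locates the threshold value $M=M_*$ at which the infimum equals $0$ but is not attained in $H^1(\mathbb R)$. At this $M_*$, no neutral mode exists for any $k\in\mathbb R$ by Remark~\ref{rmk-unstable}, which establishes the first assertion of the proposition.

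For the emergence of $k_*(M_*,t)>0$ at later times, I would use, as a test function, the neutral mode $\tilde\phi$ of a shear flow $\tilde b(y)$ that is a small perturbation of $b_{M_*}(0,y)$ for which a bound state does exist---for instance obtained by slightly decreasing $\gamma_2$ so that the high-frequency stabilizer is weaker, giving a neutral mode with critical wavenumber $\tilde k>0$ that can be taken arbitrarily small. The key comparison is
\begin{align*}
\langle L_{M_*,t}\tilde\phi,\tilde\phi\rangle
=-\tilde k^{2}+\int_{\mathbb R}\left(\frac{b_{M_*}''(t,y)}{b_{M_*}(t,y)}-\frac{\tilde b''(y)}{\tilde b(y)}\right)\tilde\phi^{2}\,dy.
\end{align*}
Because the high-frequency piece $b_{high}$ in \eqref{eq-back} disperses on the time scale $\nu^{-1}\gamma_0^{2}\gamma_1^{2}$, for $t$ in a window $(\tilde T,T]$ the potential $b_{M_*}''(t,\cdot)/b_{M_*}(t,\cdot)$ is strictly more negative than $\tilde b''/\tilde b$ on $|y|\lesssim\gamma_0\gamma_1$ by an amount of the expected order $\gamma_1\gamma_2/\gamma_0$. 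Choosing $\tilde b$ close enough to $b_{M_*}(0,y)$ that $\tilde k^{2}$ is dominated by this gain integrated against $\tilde\phi^{2}$ forces $\langle L_{M_*,t}\tilde\phi,\tilde\phi\rangle<0$, hence $\lambda_{M_*,t}<0$ and $k_*(M_*,t)>0$.

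The monotonicity $\partial_tk_*(M_*,t)>0$ on $(\tilde T,T]$ then follows from the Hellmann--Feynman identity
\begin{align*}
\partial_t\lambda_{M_*,t}=\int_{\mathbb R}\partial_t\!\left(\frac{b_{M_*}''(t,y)}{b_{M_*}(t,y)}\right)\phi_{k_*(M_*,t)}^{2}\,dy,
\end{align*}
combined with Lemma~\ref{lem-nozero} (the eigenfunction $\phi_{k_*}$ has no zeros and is bell-shaped, hence localized near the origin) and a sign analysis showing $\partial_t(b_{M_*}''/b_{M_*})<0$ on the support of $\phi_{k_*}$ throughout the window. The endpoint estimate $k_*(M_*,T)=\tfrac{1}{C}\gamma_1\gamma_2$ is extracted by matched variational bounds at $t=T$: the upper bound uses $\tilde\phi$ (or a slight modification of it) together with the explicit decay of $b_{high}$ by $t=T$, while the lower bound tests $L_{M_*,T}$ against a Gaussian of width of order $(\gamma_1\gamma_2)^{-1}$. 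I expect the principal obstacle to be the quantitative control of $\tilde\phi$: as the auxiliary flow $\tilde b$ approaches $b_{M_*}(0,y)$ the critical wavenumber $\tilde k$ shrinks to $0$ but so does the localization of $\tilde\phi$, and these two rates must be balanced so that the gain coming from the dissipation of $b_{high}$---concentrated on $|y|\lesssim\gamma_0\gamma_1$---is not wasted against the spreading tail of $\tilde\phi^{2}$.
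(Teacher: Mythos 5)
Your proposal follows essentially the same route as the paper: locate the threshold $M_*$ (the paper's $M_0$), use the neutral mode of a nearby perturbed flow as a test function to witness $\lambda_{M,t}<0$ for $t>\tilde T$, and balance the smallness of $\tilde k$ against the dissipation gain concentrated on $|y|\lesssim\gamma_0\gamma_1$ --- the paper perturbs $M$ upward to $M_0+\delta_1$ rather than decreasing $\gamma_2$, but this is cosmetic, and it resolves exactly the balance you flag by showing $k_*(M_0+\delta_1,0)\approx\delta_1$ and taking $\delta_1\approx\gamma_1\gamma_2$. The one point you gloss over is $k=0$: Remark \ref{rmk-unstable} only excludes neutral modes for $k\neq0$ at the threshold, and the paper separately verifies that for $k=0$ neither $\phi^A_0=b_M$ nor $\phi^B_0$, nor any linear combination, lies in the energy space.
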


\subsection{Existence of neutral mode}
We first show that, for $0\le t\le T=\nu^{-1}\gamma_0^2\gamma_1^2$, we can adjust $M$ to determine the value of $\lambda_{M,t}$ (actually, this results still hold for longer time such that $t\le\nu^{-1}\gamma_0^2$). 

\begin{lemma}\label{lem-M}
  The negative eigenvalue $\lambda_{M,t}$ of $L_{M,t}$ depends continuously on $M$, and it holds that $\pa_M\lambda_{M,t}<0$ when $\lambda_{M,t}<0$. Given $0>\lambda\ge -2$, there exists $1\lesssim M\lesssim 1-\lambda$ such that $\lambda_{M,t}=\lambda$.
\end{lemma}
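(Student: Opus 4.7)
The plan is to exploit the factorization $b_M(t,y)=y+MF(t,y)$ with $F$ the bracketed expression in \eqref{eq-back}, so that $b''_M=MF''$ and the Schr\"odinger potential of $L_{M,t}$ reads $V_M(y)=MF''(t,y)/(y+MF(t,y))$. I will establish continuity and strict monotonicity of $\lambda_{M,t}$ in $M$ first, and then produce brackets for the existence claim.

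For continuity, the apparent singularity of $V_M$ at $y=0$ is removable, since $b_M(t,0)=0$, $b'_M(t,0)=1+MF'(t,0)>0$, and $F''(t,0)=0$; thus $V_M$ is a bounded, exponentially decaying, jointly smooth function of $(M,y)$. Consequently $L_{M,t}$ is norm-resolvent continuous in $M$, and its simple isolated eigenvalue $\lambda_{M,t}$ (which lies strictly below the essential spectrum $[0,\infty)$ when nonzero) varies continuously. For strict monotonicity, the quotient rule gives $\pa_M V_M(y)=yF''(t,y)/b_M(t,y)^2$. The closed form of $b''_{M,\gamma_0,\gamma_1,\gamma_2}$ displayed just below \eqref{eq-back} shows that $F''(t,y)$ has sign opposite to $y$ (the slow Gaussian strictly dominates the narrow one because $\gamma_2\gamma_1^3\ll 1$ and $\gamma_2<1$), so $yF''\le 0$ on $\mathbb{R}$ with strict inequality a.e. Using simplicity of $\lambda_{M,t}$ (Kato's theory gives a real-analytic eigenpair $(\lambda_{M,t},\phi_{M,t})$) and the fact that $\phi_{M,t}$ has no zeros (Lemma~\ref{lem-nozero}), the Feynman--Hellmann identity yields $\pa_M\lambda_{M,t}=\int_{\mathbb{R}}\pa_M V_M\,\phi_{M,t}^2\,dy<0$.

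With continuity and strict monotonicity in place, the existence claim reduces by the intermediate value theorem to producing matching brackets. For the upper bound $M\lesssim 1-\lambda$ I would insert a fixed $O(1)$-width test function $\phi_*$ into the variational principle and estimate $\int V_M\phi_*^2\,dy\le -c_0 M$ (the well has depth $\sim M/\gamma_0$ on scale $\gamma_0$, giving $\int|V_M|=O(M)$), obtaining
\begin{align*}
\lambda_{M,t}\,\le\,\|\phi_*'\|_{L^2}^2+\int_{\mathbb{R}}V_M\phi_*^2\,dy\,\le\,C_0-c_0 M,
\end{align*}
which is $\le\lambda$ as soon as $M\gtrsim 1-\lambda$. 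For the lower bound $M\gtrsim 1$ I would use the one-dimensional weak-coupling estimate $|\lambda_{M,t}|\lesssim(\int|V_M|)^2=O(M^2)$ (equivalently, bound $\langle L_{M,t}\phi,\phi\rangle$ from below by a Bargmann-type inequality), which shows that achieving $\lambda_{M,t}\le -2$ forces $M$ to be bounded below by a universal constant.

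The main obstacle is the quantitative matching in these two inequalities. Because $V_M$ is not linear in $M$---the denominator $y+MF$ passes from $\sim y$ for small $M$ to $\sim MF$ for large $M$, and the well is concentrated on scale $\gamma_0$ with amplitude scaling like $M/\gamma_0$---the test function and weak-coupling bound must be tuned at the natural $\gamma_0$ scale so that the constants $c_0,C_0$ come out uniform in the small parameters $\gamma_0,\gamma_1,\gamma_2$; the remaining ingredients are standard.
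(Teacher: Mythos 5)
Your proposal is correct in substance and, for two of its three ingredients, coincides with the paper's proof: the paper likewise obtains continuity and $\pa_M\lambda_{M,t}<0$ by computing $\pa_M\bigl(b''_M/b_M\bigr)\le 0$ and pairing it against the square of the nodeless ground state (Feynman--Hellmann plus Lemma~\ref{lem-nozero}), and it obtains the upper bracket $\lambda_{M,t}\le 1-C^{-1}M$ by testing with the fixed Gaussian $\theta(y)=(\pi/2)^{-1/4}e^{-y^2}$, using that $b''_M/b_M\approx-M/\gamma_0$ on $|y|\le\gamma_0$ --- exactly your $\lambda_{M,t}\le C_0-c_0M$. The genuine divergence is the lower bracket. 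The paper asserts, via a ``weighted Poincar\'e inequality'' $-\int(b''_M/b_M)\phi^2\,dy\lesssim M\|\phi'\|_{L^2}^2$, that $\lambda_{M,t}=0$ outright for $M\le 1/C$; you instead use the one-dimensional weak-coupling (Keller/Bargmann) bound $|\lambda_{M,t}|\le\tfrac14\bigl(\int|V_M|\bigr)^2\lesssim M^2$. Your route is the more defensible one: on $\mathbb{R}$ a nontrivial potential with $V_M\le0$, $V_M\in L^1$ always produces a strictly negative ground-state energy (test with $\sqrt{\varepsilon}\,e^{-\varepsilon|y|}$ and send $\varepsilon\to0$), so the Poincar\'e inequality cannot hold uniformly over normalized $H^1$ functions and the infimum is negative for every $M>0$; the correct quantitative substitute is precisely your $|\lambda_{M,t}|\lesssim M^2$. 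The one caveat --- which you half-acknowledge by phrasing the conclusion for ``$\lambda_{M,t}\le-2$'' --- is that this bound only yields $M\gtrsim|\lambda|^{1/2}$, not $M\gtrsim1$ uniformly for all $\lambda\in[-2,0)$: for $\lambda$ near $0$ the (unique, by your monotonicity) $M$ with $\lambda_{M,t}=\lambda$ is in fact small. This is a defect of the lemma's statement rather than of your argument, and it is harmless for the application, where the lemma is invoked only with $\lambda\approx-1$, for which $M\approx1$ does follow from your brackets.
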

\begin{proof}
  We have the following weighted Poincare type inequality,
\begin{align*}
  -\int_{\mathbb R}\frac{b''_{M}(t,y)}{b_{M}(t,y)} \phi^2(y) dy=&\int_{\mathbb R}  \left|\int_{\mathbb R}\left|\frac{b''_{M}(t,y)}{b_{M}(t,y)}\right|^{\frac{1}{2}} \phi'(y')\chi_{y'\le y}dy'\right|^2 dy\\
  \le&\int_{\mathbb R}  \int_{\mathbb R}\left|\frac{b''_{M}(t,y)}{b_{M}(t,y)}\right|  \left|\phi'(y')\right|^2 dy'  dy\lesssim M\left\|\phi'\right\|_{L^2}^2.
\end{align*}
It follows that for $M\le \frac{1}{C}$, $\min_{\substack{\left\|\phi\right\|_{L^2}=1,\\\phi\in H^1}} \left\langle L_{M,t}\phi,\phi \right\rangle=0$. 

Let $\theta(y)=\left(\frac{\pi}{2}\right)^{-\frac{1}{4}}e^{-y^2}$. Recall that $\frac{b''_{M}(y)}{b_{M}(y)}<0$. By taking $\gamma_0$ small enough and $M\ll \frac{1}{\gamma_0}$ big enough, we have 
\begin{align*}
  \frac{b''_{M}(y)}{b_{M}(y)}\approx -\frac{M}{\gamma_0}\text{ for }|y|\le \gamma_0,
\end{align*}
and
\begin{align*}
  \left\|\theta^{'}\right\|_{L^2}^2+\int_{\mathbb R}\frac{b''_{M}(y)}{b_{M}(y)} \theta^{2} dy\le1-C^{-1}M,
\end{align*}
which implies that $\lambda_{M,t}\lesssim 1-C^{-1}M$. 

Also, we have
\begin{align*}
  &\pa_M\frac{b''_{M}(y)}{b_{M}(y)}
  =-2 \frac{y^2\gamma^2_0 M\left(\frac{1}{(4\nu t+\gamma^2_0)^{\frac{3}{2}}}e^{-\frac{y^2}{4\nu t+\gamma^2_0}}-\gamma_2\frac{\gamma^3_1}{(4\nu t+\gamma^2_0\gamma^2_1)^{\frac{3}{2}}}e^{-\frac{y^2}{4\nu t+\gamma^2_0\gamma^2_1}}\right)}{\left(b_{M}(y)\right)^2}\le0.
\end{align*}
Then, from the fact that all the neutral modes $\phi_k$ are real functions without zeros, we can conclude that $\lambda_{M,t}$ depends continuously on $M$ and $\pa_M\lambda_{M,t}<0$. Therefore, for any $0>\lambda\ge -2$ there exists $M$ such that $\lambda_{M,t}=\lambda$. 
\end{proof}


\subsection{Profile of the neutral mode} Next, we provide the exact profile of the neutral mode, which plays a critical role in analyzing the evolution of the critical wave number.
\begin{lemma}\label{lem-profile-phi}
  For critical wave number $0<k_*(M,t)\le2$, the normalized neutral mode $\phi_{k_*(M,t)}(y)$ has the following properties:
\begin{itemize}
    \item $\phi_{k_*(M,t)}(y)$ is an even, positive function with no zeros;
    \item $\phi_{k_*(M,t)}(y)$ decays monotonically on \( [0, +\infty) \);
    \item On $|y|\le \frac{1}{k_*(M,t)}$, $\frac{1}{C}|k_*(M,t)|^{\frac{1}{2}}\le\phi_{k_*(M,t)}(y)\le C |k_*(M,t)|^{\frac{1}{2}}$;
    \item For $|y|\ge \frac{1}{k_*(M,t)}$, $\phi_{k_*(M,t)}(y)\le  C|k_*(M,t)|^{\frac{1}{2}}e^{-\frac{1}{C}k_*(M,t)|y|}$.
\end{itemize}
where the constant $C$ does not depend on $M$ and $t$.
\end{lemma}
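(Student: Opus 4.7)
My plan is to handle the structural properties first and then extract the quantitative bounds from an inner/outer analysis of the ODE \eqref{eq-SL}. The absence of zeros, and hence the choice of sign making $\phi_{k_*}>0$, is already furnished by Lemma \ref{lem-nozero}. Evenness follows because $b_M(t,\cdot)$ is odd, so the coefficient $W(y):=b_M''(t,y)/b_M(t,y)$ is even; hence $L_{M,t}$ commutes with the reflection $y\mapsto -y$, and since Lemma \ref{lem-nozero} guarantees that the ground-state eigenspace is one-dimensional (the positive ground state is unique), we must have $\phi_{k_*}(y)=\phi_{k_*}(-y)$. For monotone decay on $[0,\infty)$, I rewrite the equation as $\phi''=(k_*^2+W)\phi$ with $W<0$ decaying like a Gaussian on scale $\gamma_0$; outside a fixed compact window one has $k_*^2+W>k_*^2/2>0$, so $\phi$ is convex and positive there, and together with $\phi\to 0$ at infinity this gives eventual monotone decay. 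The absence of an interior minimum on $[0,\infty)$ is then ruled out either by a direct ODE contradiction argument that uses the fact that $|W|$ is decreasing on $[0,\infty)$ for our specific $b_M$, or by a Schwarz symmetrization argument applied to the variational characterization of $\lambda_{M,t}$.

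For the quantitative bounds I decompose $[0,\infty)$ into an outer region $|y|\ge 1/k_*$ and an inner region $|y|\le 1/k_*$. In the outer region $|W|\ll k_*^2$, so the equation is a small perturbation of $\phi''=k_*^2\phi$; a Sturm/Gronwall comparison against the decaying solution $e^{-k_* y}$ yields $\phi(y)\le C\phi(1/k_*)\,e^{-\frac{1}{C}k_*(|y|-1/k_*)}$, which gives the fourth bullet once we know $\phi(1/k_*)\lesssim k_*^{1/2}$. In the inner region, monotonicity gives $\phi(y)\le \phi(0)$; on the sub-interval $[\gamma_0,1/k_*]$ the equation is again close to $\phi''=k_*^2\phi$ on an interval of length $\lesssim 1/k_*$, so $\phi$ can change by at most a multiplicative factor of $e$. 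The $k_*^{1/2}$ scaling then pops out of the $L^2$ normalization: once $\phi(y)\approx \phi(0)$ on $[0,1/k_*]$ and $\phi$ decays exponentially beyond, we get $1=\int \phi^2\,dy\approx \phi(0)^2/k_*$, so $\phi(0)\approx k_*^{1/2}$.

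The main obstacle, in my view, is the uniform lower bound $\phi_{k_*}(y)\ge \frac{1}{C}k_*^{1/2}$ on the entire window $|y|\le 1/k_*$, with a constant $C$ independent of $M$ and $t$. The potential $|W|$ can be as large as $M/\gamma_0$ on the innermost scale $|y|\lesssim \gamma_0$, so a priori $\phi$ could concentrate sharply at the origin and decay substantially before $|y|$ reaches $1/k_*$. To rule this out I would multiply \eqref{eq-SL} by $\phi'$ and integrate across $[0,\gamma_0]$ to obtain a quantitative bound on the oscillation of $\phi$ inside the well, using the explicit Gaussian form of $b_M''/b_M$ and the fact that the product (depth)$\,\times\,$(width)${}^{2}\sim M\gamma_0$ of the well is small; this should limit the drop $\phi(0)/\phi(\gamma_0)$ by a constant uniform in $(M,t)$. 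Making this last step fully quantitative and uniform in the parameters is where the bulk of the work will lie.
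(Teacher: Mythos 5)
Your structural steps (positivity and absence of zeros from Lemma \ref{lem-nozero}, evenness from the parity of $b_M''/b_M$ plus simplicity of the ground state, monotone decay from the single sign change of $b_M''/b_M+k_*^2$) match the paper's proof. For the quantitative bounds, however, you take a genuinely different route: the paper does not do an inner/outer ODE analysis at all, but instead writes the decaying solution explicitly as the second solution $\phi^B_{k_*}$ of \eqref{eq-SL} via the reduction-of-order formula \eqref{eq-phiB} built from $\phi^A_{k_*}=b_M\phi_1$, imports the pointwise bounds on $\phi_1$ from Proposition \ref{prop-phi}, evaluates $\phi^B_{k_*}$ at $y=0$ and $y=\pm 1/k_*$ (all of size $\approx 1$), and then gets $\|\phi^B_{k_*}\|_{L^2}\approx k_*^{-1/2}$ from monotonicity. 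This sidesteps entirely the "drop across the well" issue that you correctly single out as the crux.

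There is a gap precisely at that crux, and your proposed fix does not close it. Multiplying by $\phi'$ and integrating over $[0,\gamma_0]$ (or integrating the equation directly) controls the \emph{oscillation of $\phi$ inside the well}: since $\phi'(y)=\int_0^y(W+k_*^2)\phi$ with $W=b_M''/b_M$ and $\int_0^{\gamma_0}|W|\lesssim M$, one gets $|\phi(\gamma_0)-\phi(0)|\lesssim M\gamma_0\,\phi(0)$, which is indeed small. But it also shows that the logarithmic slope acquired at the edge of the well is $\phi'(\gamma_0)/\phi(\gamma_0)\approx -\int_0^{\gamma_0}|W|\approx -2M$, which is \emph{not} small. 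Propagating this forward over the much longer interval $[\gamma_0,1/k_*]$ by local estimates alone (Riccati variable $u=\phi'/\phi$, $u'=W+k_*^2-u^2$) does not prevent $u$ from blowing up to $-\infty$, i.e.\ $\phi$ from vanishing, before $y$ reaches $1/k_*$; the one-sided bounds you can extract from the initial data $\phi'(0)=0$ are consistent with $\phi$ collapsing. The missing ingredient is a \emph{global} input: since $\phi_{k_*}$ is positive on all of $\mathbb R$, decays at $+\infty$, and $W\le 0$, a backward Riccati/comparison argument from $y=+\infty$ (where $\phi'/\phi\to -k_*$) shows $\phi'/\phi\ge -k_*$ everywhere — otherwise $u'\le k_*^2-u^2<0$ forces finite-time blowup of $u$ and a zero of $\phi$. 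This gives $\phi(y)\ge e^{-1}\phi(0)$ on $[0,1/k_*]$ uniformly in $(M,t)$, after which your normalization argument $1\approx\phi(0)^2/k_*$ and the outer comparison go through. So the architecture of your proof is viable, but you must replace the local energy estimate at the well by this global positivity/comparison argument (or by the paper's explicit formula \eqref{eq-phiB}). A second, smaller inaccuracy: on $[\gamma_0,1/k_*]$ the equation is \emph{not} pointwise close to $\phi''=k_*^2\phi$ (at $y=\gamma_0$ one has $|W|\approx M/\gamma_0\gg k_*^2$); what is true, and what your argument actually needs, is only that $\int_{\gamma_0}^{1/k_*}|W|\,dy\lesssim M$.
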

\begin{proof}
From Lemma \ref{lem-nozero}, we know that $\phi_{k_*(M,t)}(y)$ has no zeros, allowing us to take it as a positive function. Since $\frac{b_{M}''}{b_{M}}$ is even, we can have $\phi_{k_*(M,t)}(y)$ to be even as well. 

As $\phi_{k_*}$ is even and $\frac{M}{\gamma_0}\gg1$, we have $\phi_{k_*}'(0)=0$, $\phi_{k_*}''(0)=(\frac{b''_{M}(t,0)}{b_{M}(t,0)}+{k_*}^2)\phi_{k_*}(0)<0$, so $y=0$ is a maximum point of $\phi_{k_*}$. Let $y^*>0$ be the first zero point of  $\phi''_{k_*}(y)$ on \( [0, +\infty) \). It holds that
\begin{align*}
  b''_{M}(t,y^*) =-{k_*}^2b_{M}(t,y^*),
\end{align*}
which is 
\begin{align*}
  &-2y^*\gamma^2_0 M\left(\frac{1}{(4\nu t+\gamma^2_0)^{\frac{3}{2}}}e^{-\frac{y^{*2}}{4\nu t+\gamma^2_0}}-\gamma_2\frac{\gamma^3_1}{(4\nu t+\gamma^2_0\gamma^2_1)^{\frac{3}{2}}}e^{-\frac{y^{*2}}{4\nu t+\gamma^2_0\gamma^2_1}}\right)\\
  =&-{k_*}^2 \left(y^*+M \left(\int_{0}^{y^*} \frac{\gamma^2_0}{\sqrt{4\nu t+\gamma^2_0}}e^{-\frac{z^2}{4\nu t+\gamma^2_0}} dz -\gamma_2\int_{0}^{y^*} \frac{\gamma^2_0\gamma^3_1}{\sqrt{4\nu t+\gamma^2_0\gamma^2_1}}e^{-\frac{z^2}{4\nu t+\gamma^2_0\gamma^2_1}} dz\right)\right).
\end{align*}
As $t\le T$, we have $y^*\approx \gamma_0 \left|\ln({k_*}^2\gamma_0)\right|^{\frac{1}{2}}$. Then for $y\ge y^*$, the exponential terms $e^{-\frac{y^{2}}{4\nu t+\gamma^2_0}}$  and $e^{-\frac{y^{2}}{4\nu t+\gamma^2_0\gamma^2_1}}$ will be very small. One can easily check that 
\begin{align*}
  \pa_y \left(\frac{b''_{M}(t,y)}{b_{M}(t,y)}\right)=\frac{b'''_{M} b_{M}(t,y)-b''_{M} b_{M}'(t,y)}{b_{M}^2(t,y)}\approx \frac{My}{\gamma_0^3}e^{-\frac{y^{2}}{4\nu t+\gamma^2_0}} >0.
\end{align*}
Thus $y^*$ is the only positive zero point of $\phi''_{k_*}$, implying that $\phi''_{k_*}(y)>0$ for $y>y^*$. From the boundary condition \eqref{eq-SL-bc}, it follows that $\phi_{k_*}(y)$ decays monotonically on $[0,+\infty)$. 

Next, we give more precise profile of $\phi_{k_*}(y)$. As \eqref{eq-SL} is a second order ODE, it has two linearly independent solutions. Let 
\begin{align*}
  \phi^A_{k_*}(y)=\phi(k,y,c)=b_M(t,y)\phi_1(y,k,c_r), \text{ with }k=k_*, c=0,
\end{align*}
where $\phi(k,y,c)$ and $\phi_1(k,y,c_r)$ is given in Proposition \ref{prop-phi}. Then, $\phi^A_{k_*}$ is a solution of \eqref{eq-SL}, but it grows exponentially as $y\to\pm\infty$, thus failing to satisfy the boundary condition \eqref{eq-SL-bc}. The other solution has the expression
\begin{equation}\label{eq-phiB}
  \begin{aligned}   
  \phi^B_{k_*}(y)=&\frac{\phi^A_{k_*}(y)}{b_{M}'(t,0)}\int^{y}_{-\infty}\frac{b_{M}'(t,0)-b_{M}'(t,y')}{b_{M}(t,y')^2}dy'-\frac{\phi_1(y,k_*,0)}{b_{M}'(t,0)}\\
  &+\phi^A_{k_*}(y)\int^y_{-\infty}\frac{1}{b_{M}(t,y')^2}\left(\frac{1}{\phi_1^2(y',k_*,0)}-1\right)dy'.   
  \end{aligned}
\end{equation}
As stated in Lemma 5.6 of \cite{LiMasmoudiZhao2022critical}, if $k_*$ is a critical number, then $\phi^B_{k_*}(y)$ belongs to the energy space and decays exponentially as $y\to\pm\infty$. Therefore, it holds that
\begin{align}\label{eq-phik*}
  \phi_{k_*}(y)=-\frac{\phi^B_k(y)}{\left\|\phi^B_k\right\|_{L^2}}.
\end{align}
From Proposition \ref{prop-phi} and \eqref{eq-phiB}, one can easily check that 
\begin{align*}
  \phi^B_{k_*}(0)=-\frac{1}{b_{M}'(t,0)},\ \left|\phi^B_{k_*}(y)\right|\approx 1 \text{ for }y=\pm \frac{1}{k_*},\ \left|\phi^B_{k_*}(y)\right|\le C e^{-\frac{1}{C}|{k_*}||y|}\text{ for } |y|\ge \frac{1}{k_*}.
\end{align*}
As $\left|\phi^B_{k_*}(y)\right|$ decays monotonically on $[0, +\infty)$, we have $\left\|\phi^B_{k_*}\right\|_{L^2}\approx k_*^{-\frac{1}{2}}$. The results of this lemma follows from \eqref{eq-phik*}.
\end{proof}

\subsection{Proof of Proposition \ref{lem-neutral}}
Now we show the evolution of critical wave number for $k_*(M,t)\approx 1$ and give the proof of Proposition \ref{lem-neutral}.

From Lemma \ref{lem-M}, for any $\frac{1}{2}\le\tilde k\le 2$, we can choose suitable  $M\approx1$ such that $k_*(M,0)=\tilde k$. We will show that for $0<t\le T$, it holds that
\begin{align}\label{eq-est-0-T}
  \int_{\mathbb R} \left(\frac{b''_{M}(t,y)}{b_{M}(t,y)}-\frac{b''_{M}(0,y)}{b_{M}(0,y)}\right) \phi_{k_*(M,0)}^2 dy\le-C_1\frac{\gamma_2\nu t}{\gamma_0^2\gamma_1},
\end{align}
where $C_1$ is a uniform constant for $\frac{1}{2}\le k_*(M,0)\le 2$.

We write
\begin{align*}
  &\frac{b''_{M}(t,y)}{b_{M}(t,y)}-\frac{b''_{M}(0,y)}{b_{M}(0,y)}\\
  =&\frac{\left(b''_{M}(t,y)-b''_{M}(0,y)\right)b_{M}(0,y) }{b_{M}(t,y)b_{M}(0,y)}-\frac{b''_{M}(0,y)\left(b_{M}(t,y)-b_{M}(0,y)\right)}{b_{M}(t,y)b_{M}(0,y)}\\
  =&\frac{2y\gamma^2_0 M\left(\gamma_2\frac{\gamma^3_1}{(4\nu t+\gamma^2_0\gamma^2_1)^{\frac{3}{2}}}e^{-\frac{y^2}{4\nu t+\gamma^2_0\gamma^2_1}}-\gamma_2\frac{1}{\gamma^3_0}e^{-\frac{y^2}{\gamma^2_0\gamma^2_1}}\right)}{b_{M}(t,y)}-\frac{2y\gamma^2_0 M\left(\frac{1}{(4\nu t+\gamma^2_0)^{\frac{3}{2}}}e^{-\frac{y^2}{4\nu t+\gamma^2_0}}-\frac{1}{\gamma^3_0}e^{-\frac{y^2}{\gamma^2_0}} \right)}{b_{M}(t,y)}\\
  &-\frac{b''_{M}(0,y)\left(b_{M}(t,y)-b_{M}(0,y)\right)}{b_{M}(t,y)b_{M}(0,y)}\\
  \eqdef&B_1+B_2+B_3.
\end{align*}

The main contribution is from $B_1$. Let
\begin{align*}
  h_1(t,y)=\gamma_2\frac{\gamma^3_1}{(4\nu t+\gamma^2_0\gamma^2_1)^{\frac{3}{2}}}e^{-\frac{y^2}{4\nu t+\gamma^2_0\gamma^2_1}}-\gamma_2\frac{1}{\gamma^3_0}e^{-\frac{y^2}{\gamma^2_0\gamma^2_1}},
\end{align*}
we can see that $h_1(t,y)$ is even in $y$, $h_1(t,0)<0$ for $t>0$, and its integration is negative
\begin{align*}
  \int_{\mathbb R} h_1(t,y) dy= \sqrt\pi \gamma_2\left(\frac{\gamma_1^3}{4\nu t+\gamma_0^2\gamma_1^2}-\frac{\gamma_1}{\gamma_0^2}\right)=-\sqrt\pi \gamma_2\frac{4\nu t\gamma_1}{(4\nu t+\gamma_0^2\gamma_1^2)\gamma_0^2}.
\end{align*}
It is clear that $h_1(t,y)$ has only one zero point $\tilde y$ on $[0, +\infty)$, which satisfies
\begin{align*}
  h_1(t,\tilde y)=0 \text{ for }\tilde y=\sqrt{\frac{(4\nu t+\gamma_0^2\gamma_1^2)\gamma_0^2\gamma_1^2}{4\nu t}\ln{\frac{(4\nu t+\gamma_0^2\gamma_1^2)^{\frac{3}{2}}}{\gamma_0^3\gamma_1^3}}}
\end{align*}
For $0<t\le T$, it is easy to check that $\sqrt{\frac{3}{2}}\gamma_0\gamma_1\le\tilde y\lesssim\gamma_0\gamma_1$. Next, we calculate the integration of negative part of $h_1(t,y)$. We have
\begin{align*}
  \int_{0}^{\tilde y}h_1(t,y) dy=&\gamma_2 \frac{\gamma^3_1}{(4\nu t+\gamma^2_0\gamma^2_1)^{\frac{3}{2}}}\int_{0}^{\tilde y}e^{-\frac{y^2}{4\nu t+\gamma^2_0\gamma^2_1}} dy-\gamma_2\frac{1}{\gamma^3_0}\int_{0}^{\tilde y}e^{-\frac{y^2}{\gamma^2_0\gamma^2_1}} dy\\
  =&\gamma_2\frac{\gamma^3_1}{(4\nu t+\gamma^2_0\gamma^2_1)}\int_{0}^{\frac{\tilde y}{(4\nu t+\gamma^2_0\gamma^2_1)^{\frac{1}{2}}}}e^{-z^2} dz-\gamma_2\frac{\gamma_1}{\gamma^2_0}\int_{0}^{\frac{\tilde y}{\gamma_0\gamma_1}}e^{-z^2} dz\\
  =&-\gamma_2\left(\frac{\gamma_1}{\gamma^2_0}-\frac{\gamma^3_1}{(4\nu t+\gamma^2_0\gamma^2_1)}\right)\int_{0}^{\frac{\tilde y}{\gamma_0\gamma_1}}e^{-z^2} dz-\gamma_2\frac{\gamma^3_1}{(4\nu t+\gamma^2_0\gamma^2_1)}\int_{\frac{\tilde y}{(4\nu t+\gamma^2_0\gamma^2_1)^{\frac{1}{2}}}}^{\frac{\tilde y}{\gamma_0\gamma_1}}e^{-z^2} dz\\
  \ge&- \frac{4\gamma_2\gamma_1\nu t}{\gamma^2_0(4\nu t+\gamma^2_0\gamma^2_1)}\frac{\tilde y}{\gamma_0\gamma_1}-\frac{\gamma_2\gamma^3_1}{(4\nu t+\gamma^2_0\gamma^2_1)}\left(\frac{\tilde y}{\gamma_0\gamma_1}-\frac{\tilde y}{(4\nu t+\gamma^2_0\gamma^2_1)^{\frac{1}{2}}}\right)\\
  \gtrsim& -\frac{\gamma_2\nu t}{\gamma^4_0\gamma_1}.
\end{align*}
Thus, we have
\begin{align*}
  \left|\int_{-\tilde y}^{\tilde y} h_1(t,y) dy\right| \le C\left|\int_{\mathbb R} h_1(t,y)dy\right|,
\end{align*}
which means that the integration of the negative part is in the same size of the whole integration. It follows that
\begin{align}\label{eq-neg-part}
  \left|\int_{-\tilde y}^{\tilde y} h_1(t,y) dy\right|\ge (1+\frac{1}{C})\left|\int_{\mathbb R\setminus[-\tilde y,\tilde y]} h_1(t,y) dy\right|
\end{align}

From Lemma \ref{lem-profile-phi}, we know that $\phi_{k_*(M,0)(y)}\approx 1$ for $|y|\le \frac{1}{2}$ has uniform lower bound with $\frac{1}{2}\le k_*(M,0)\le 2$, and decays monotonically on $[0,+\infty)$. Recall that $\frac{2\gamma^2_0 M}{1+M\gamma_0}\le\frac{2y\gamma^2_0 M}{b_{M}(t,y)}\le 2\gamma^2_0 M$, and $\tilde y\approx\gamma_0\gamma_1\ll1$. We deduce from \eqref{eq-neg-part} that
\begin{align*}
  \int_{\mathbb R}B_1 \phi_{k_*(M,0)}^2dy\lesssim \phi_{k_*(M,0)}(\tilde y) \gamma^2_0 M\left(\frac{\int_{-\tilde y}^{\tilde y}h_1(t,y)dy}{1+M\gamma_0}+\int_{\mathbb R\setminus[-\tilde y,\tilde y]}h_1(t,y)dy\right)\le-\frac{1}{C}\frac{\gamma_2\nu t}{\gamma^2_0\gamma_1}.
\end{align*}

The estimate for $B_2$ is very similar to $B_1$. Let
\begin{align*}
  h_2(t,y)=\frac{1}{(4\nu t+\gamma^2_0)^{\frac{3}{2}}}e^{-\frac{y^2}{4\nu t+\gamma^2_0}}-\frac{1}{\gamma^3_0}e^{-\frac{y^2}{\gamma^2_0}}.
\end{align*}
It holds that
\begin{align*}
   \int_{\mathbb R} \min(h_2(t,y),0) dy \approx \int_{\mathbb R} h_2(t,y)dy \approx-\frac{\nu t}{\gamma^4_0}.
\end{align*}
Then by the same argument, we have
\begin{align*}
  \left|\int_{\mathbb R}B_2 \phi_{k_*(M,0)}^2dy\right|\lesssim \frac{ \nu t}{\gamma^2_0 }.
\end{align*}

The last term $B_3$ is also an error term. Recall that $b_M(t,y)$ satisfis the heat equation \eqref{eq-heat}, we have
\begin{align*}
  \left|B_3\right|=&\left|\frac{b''_{M}(0,y)\left(b_{M}(t,y)-b_{M}(0,y)\right)}{b_{M}(t,y)b_{M}(0,y)}\right|=\left|\frac{b''_{M}(0,y)\int^t_0\pa_tb_{M}(t',y) dt'}{b_{M}(t,y)b_{M}(0,y)}\right|\\
  =&\left|\frac{b''_{M}(0,y)\nu\int^t_0b_{M}''(t',y) dt'}{b_{M}(t,y)b_{M}(0,y)}\right|\lesssim \frac{\nu t}{\gamma_0^2}e^{-\frac{y^2}{\gamma_0^2}}.
\end{align*}

Combining the above estimates and using the fact that $\gamma_1\ll\gamma_2$, we obtain \eqref{eq-est-0-T}.

From the proof, we can see that $C_1$ in is an uniform constant for $\frac{1}{2}\le k_*(M,0)\le 2$. We can take suitable $M$ such that $\lambda_{M,0}=-k_*^2(M,0)=-1+\frac{C_1}{2}\gamma_1\gamma_2$, and
\begin{align*}
  \lambda_{M,T}=&-k_*^2(M,T)=\min_{\substack{\left\|\phi\right\|_{L^2}=1,\\\phi\in H^1}}\left(\left\|\phi'\right\|_{L^2}^2+\int_{\mathbb R}\frac{b''_{M}(T,y)}{b_{M}(T,y)} \phi^2 dy \right)\\
  \le&\left\|\phi_{k_*(M,0)}'\right\|_{L^2}^2+\int_{\mathbb R}\frac{b''_{M}(T,y)}{b_{M}(T,y)} \phi_{k_*(M,0)}^2 dy\le \lambda_{M,0}- C_1\gamma_1\gamma_2=-1-\frac{C_1}{2}\gamma_1\gamma_2.
\end{align*}

If we replace $\phi_{k_*(M,0)}$ in \eqref{eq-est-0-T} with the neutral mode $\phi_{k_*(M,T)}$ corresponding to time $T$ and $k_*(M,T)$, we can similarly prove that
\begin{align*}
  \lambda_{M,0}\le \lambda_{M,T}+\frac{1}{C}\gamma_1\gamma_2.
\end{align*}

Moreover, from the proof of \eqref{eq-est-0-T}, we also have
\begin{align}\label{eq-est-t-T}
  \int_{\mathbb R} \left(\frac{b''_{M}(t_2,y)}{b_{M}(t_2,y)}-\frac{b''_{M}(t_1,y)}{b_{M}(t_1,y)}\right) \phi_{k_*(M,t_1)}^2 dy\le-\frac{1}{C}\frac{\gamma_2\nu (t_2-t_1)}{\gamma_0^2\gamma_1},
\end{align}
it follows that $\pa_tk_*(M,t)>0 \text{ for }t\in[0,T]$.

This finishes the proof of Proposition \ref{lem-neutral}. 

\subsection{Proof of Proposition \ref{lem-neutral2}}
Recalling the proof of Lemma \ref{lem-profile-phi}, there are two linearly independent solutions to \eqref{eq-SL} when $k=0$,
\begin{align*}
  \phi^A_0(y)&=b_{M}(t,y),\\
  \phi^B_0(y)&=\frac{b_{M}(t,y)}{b_{M}'(t,0)}\int^{y}_{-\infty}\frac{b_{M}'(t,0)-b_{M}'(t,y')}{b_{M}(t,y')^2}dy'-\frac{1}{b_{M}'(t,0)}.
\end{align*}
It is clear that $\phi^B_0(y)\approx-\frac{1}{b_{M}'(t,0)}$ as $y\to-\infty$ and $\phi^B_0(y)\approx y$ for $y\to+\infty$. Therefore, neither $\phi^A_0(y)$ nor $\phi^B_0(y)$, nor any linear combinations of them, belongs to the energy space. Thus for any $M$ and $t$ there is no neutral mode for $k=0$. 

Then, from Lemma \ref{lem-M}, we see that there is a critical value $M_0$ such that 
\begin{align*}
  \min_{\substack{\left\|\phi\right\|_{L^2}=1,\\\phi\in H^1}} \left\langle L_{M,0}\phi,\phi \right\rangle=0,\text{ for }M\le M_0,\quad
  \min_{\substack{\left\|\phi\right\|_{L^2}=1,\\\phi\in H^1}} \left\langle L_{M,0}\phi,\phi \right\rangle<0,\text{ for }M> M_0.
\end{align*}
Our goal is to show that $\min_{\substack{\left\|\phi\right\|_{L^2}=1,\\\phi\in H^1}} \left\langle L_{M_0,T}\phi,\phi \right\rangle\le-k_0^2$ for some $k_0>0$. The proof of Proposition \ref{lem-neutral} relies on the initial neutral mode $\phi_{k_*(M,0)}$, but in this case, no initial nuetral mode exists. The main challenge is to identify a suitable function showing that $L_{M,T}$ has a negative eigenvalue.

For extremely small $\delta_0$, $L_{M_0+\delta_0,0}$ has a negative eigenvalue $\lambda_{M_0+\delta_0,0}<0$, which is also extremely small. The exact value of $\delta_0$ and $\lambda_{M_0+\delta_0,0}$ are not crucial, we can treat them as $0+$. We simply need an starting point where the neutral mode exists. Next, for $M=M_0+\delta$ with $\delta\ge\delta_0$, we have $\lambda_{M_0+\delta,0}<\lambda_{M_0+\delta_0,0}$. We now show the dependent of $\lambda_{M,0}$ on $M$.

For $M\ge M_0+\delta_0$ and $\varepsilon>0$, it is clear that 
\begin{align*}
  -\frac{\lambda_{M+\varepsilon,0}-\lambda_{M,0}}{\varepsilon}\ge-\int_{\mathbb R} \frac{\left(\frac{b''_{M+\varepsilon }(0,y)}{b_{M+\varepsilon }(0,y)}-\frac{b''_{M}(0,y)}{b_{M}(0,y)}\right)}{\varepsilon}\phi_{k_*(M,0)}^2 dy.
\end{align*}
Taking $\varepsilon\to0+$, we deduce from Lemma \ref{lem-profile-phi} that
\begin{align*}
  -\pa_M\lambda_{M,0}\ge&-\int_{\mathbb R} \left(\pa_M \frac{b''_{M}(0,y)}{b_{M}(0,y)}\right)\phi_{k_*(M,0)}^2 dy\\
  =&\int_{\mathbb R}2 \frac{y^2\gamma^2_0 M\left(\frac{1}{(4\nu t+\gamma^2_0)^{\frac{3}{2}}}e^{-\frac{y^2}{4\nu t+\gamma^2_0}}-\gamma_2\frac{\gamma^3_1}{(4\nu t+\gamma^2_0\gamma^2_1)^{\frac{3}{2}}}e^{-\frac{y^2}{4\nu t+\gamma^2_0\gamma^2_1}}\right)}{\left(b_{M}(y)\right)^2}\phi_{k_*(M,0)}^2dy\\
  \gtrsim&k_*(M,0)=\sqrt{-\lambda_{M,0}},
\end{align*}
which implies $\pa_Mk_*(M,0)\gtrsim 1$. Therefore, for some small $\delta_1$ which will be determined later, we have for $M=M_0+\delta_1$, the critical wave number $k_*(M_0+\delta_1,0)\gtrsim \delta_1$. The corresponding neutral mode is $\phi_{k_*(M_0+\delta_1,0)}$, which is the desired funciton.

From the assumption on $M_0$, it holds that
\begin{align*}
  \left\langle L_{M_0,0}\phi_{k_*(M_0+\delta_1,0)},\phi_{k_*(M_0+\delta_1,0)} \right\rangle>0.
\end{align*}
Then, using Lemma \ref{lem-profile-phi} again, we have
\begin{align*}
  k_*^2(M_0+\delta_1,0)<&\left\langle L_{M_0,0}\phi_{k_*(M_0+\delta_1,0)},\phi_{k_*(M_0+\delta_1,0)} \right\rangle+k_*^2(M_0+\delta_1,0)\\
  =&\int_{\mathbb R} \left(\frac{b''_{M}(0,y)}{b_{M}(0,y)}-\frac{b''_{M_0+\delta_1}(0,y)}{b_{M_0+\delta_1}(0,y)}\right) \phi_{k_*(M,0)}^2dy\\
  \approx&\int_{\mathbb R} \frac{\delta_1}{\gamma_0}e^{-\frac{y^2}{\gamma_0^2}} k_*(M_0+\delta_1,0)dy\approx \delta_1k_*(M_0+\delta_1,0).
\end{align*}
Recall the fact that $k_*(M_0+\delta_1,0)\gtrsim \delta_1$. It holds that $k_*(M_0+\delta_1,0)\approx\delta_1$. Accordingly,
\begin{align*}
  \left\langle L_{M_0,\gamma_0,\gamma_1,0}\phi_{k_*(M_0+\delta_1,0)},\phi_{k_*(M_0+\delta_1,0)} \right\rangle\gtrsim -\delta_1^2.
\end{align*}

By applying the same argument in Proposition \ref{lem-neutral}, we obtain
\begin{align*}
  \int_{\mathbb R} \left(\frac{b''_{M_0}(T,y)}{b_{M_0}(T,y)}-\frac{b''_{M_0}(0,y)}{b_{M_0}(0,y)}\right) \phi_{k_*(M_0+\delta_1,0)}^2dy\approx-\frac{\gamma_2\nu T}{\gamma_0^2\gamma_1}\delta_1 \approx-\gamma_1\gamma_2 \delta_1.
\end{align*}
The only difference is that, in Proposition \ref{lem-neutral}, we have $k_*\approx1$ and $\phi_{k_1}\approx1$ for $|y|\le1$; here we have $k_*\approx\delta_1$ and $\phi_{k_1}\approx \delta_1^{\frac{1}{2}}$ for $|y|\le1$.

Therefore, for $\delta_1\ll\gamma_1\gamma_2$, we prove that
\begin{align}\label{eq-kT}
   \left\langle L_{M_0,T}\phi_{k_*(M_0+\delta_1,0)},\phi_{k_*(M_0+\delta_1,0)} \right\rangle\approx -\gamma_1\gamma_2 \delta_1.
\end{align}
This shows the existance of a positive critical wave number. This result holds for any $\delta_1>0$, therefore for any $t>0$ there exists $k_*(M_0,t)>0$.

We can take $\delta_1=\frac{1}{C}\gamma_1\gamma_2$ where $C$ is sufficiently large to ensure that \eqref{eq-kT} still holds. Consequently, the critical wave number can reach $k_0=\frac{1}{C}\gamma_1\gamma_2$. For $b_{M_0-\varepsilon}(t,y)$ with $0<\varepsilon\ll \gamma_1\gamma_2$, a positive critical wave number will also emerge as time evolves, with the sharp transition time being some $\tilde T>0$.  

Using similar techniques as in \eqref{eq-est-t-T}, we also have $\pa_tk_*(M,t)>0$ for any $t\in(0,T]$ such that $k_*(M,t)>0$. This finished the proof of Proposition \ref{lem-neutral2}.

\section{Quantitative estimate for the unstable eigenvalue} 
In this section, we analyze the unstable shear flow $b_{M,\gamma_0,\gamma_1,\gamma_2}(T,y)$  obtained in Proposition \ref{lem-neutral} and provide a quantitative estimate of the unstable eigenvalue $c$ for the Rayleigh operator $\mathcal R_{b_{M,\gamma_0,\gamma_1,\gamma_2}(T,y),k}$ on $k=\pm1$ modes. 
\begin{proposition}\label{lem-quanti}
  For the shear flow $b_{M,\gamma_0,\gamma_1,\gamma_2}(T,y)$ in Proposition \ref{lem-neutral}, the Rayleigh operator $\mathcal R_{b_{M,\gamma_0,\gamma_1,\gamma_2}(T,y),\pm1}$ has a unique unstable eigenvalue $c=ic_i=i\frac{1}{C}\gamma_0\gamma_1\gamma_2$.
\end{proposition}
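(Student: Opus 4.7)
The plan is to recognize that the neutral mode $(c,k)=(0,k_*(M,T))$ supplied by Proposition~\ref{lem-neutral} lies on a smooth branch of eigenvalues, and then to compute the slope of this branch at $k=k_*(M,T)$ in order to estimate the eigenvalue at $k=\pm 1$. Throughout, set $k_*:=k_*(M,T)$ and write $b=b_M(T,\cdot)$.

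First I would construct the Wronskian $W(c,k)$ of two solutions $\phi_\pm(\cdot;c,k)$ of the Rayleigh equation that decay at $\pm\infty$. Its zeros locate the eigenvalues of $\mathcal R_{b,k}$. Since $b$ is odd, any unstable $c$ must be purely imaginary $c=ic_i$, and the eigenfunction splits into an even and an odd component. The neutral mode from Proposition~\ref{lem-neutral} is even with $\phi_{k_*}(0)>0$ and $\phi'_{k_*}(0)=0$, so the relevant branch is cut out by $\phi_+'(0;ic_i,k)=0$, while an argument analogous to Lemma~\ref{lem-nozero} applied to odd modes rules out an odd unstable branch emanating from the same neutral point. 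Denote this ``even'' Wronskian by $W(c,k)$.

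Next I would apply the implicit function theorem at $(c,k)=(0,k_*)$, where $W=0$ and $\phi_+(\cdot;0,k_*)=\phi_{k_*}$. The transversality condition $\partial_c W(0,k_*)\neq 0$ follows from the Frobenius-type representation of $\phi_+$ near the critical layer $y_c=c/b'(0)$ used in~\cite{lin2003instability,LiMasmoudiZhao2022critical}, combined with the profile of the neutral mode from Lemma~\ref{lem-profile-phi}. This produces a smooth curve $k\mapsto c(k)=ic_i(k)$ with $c_i(k_*)=0$, and differentiating the Rayleigh equation in $c$ and $k$ and testing against $\phi_{k_*}$ gives
\begin{equation*}
 \partial_k c_i(k_*)=-\frac{\partial_k W}{\partial_{c_i} W}\bigg|_{(0,k_*)}= -\,\frac{2k_*\int_{\mathbb R}\phi_{k_*}^2\,dy}{\int_{\mathbb R}\frac{b''(y)}{b(y)^2}\phi_{k_*}^2\,dy}\cdot b'(0).
\end{equation*}
Using Lemma~\ref{lem-profile-phi} to localize the denominator on $|y|\lesssim\gamma_0$, where $|b''/b^2|\sim M\gamma_0^{-3}$ and $\phi_{k_*}^2\sim k_*$, one computes that the denominator is of size $\sim M/\gamma_0^2$ and the numerator is of size $\sim 1$. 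After extracting the correct powers of $b'(0)\approx 1+M$, this yields $\partial_k c_i(k_*)\approx -\gamma_0$, uniformly for $k$ near $k_*$.

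Finally, since Proposition~\ref{lem-neutral} gives $k_*-1\approx \gamma_1\gamma_2/C$, integration along the branch produces
\begin{equation*}
 c_i(1)=-\int_1^{k_*}\partial_k c_i(k)\,dk\approx \frac{\gamma_0\gamma_1\gamma_2}{C},
\end{equation*}
which is the claimed estimate. Uniqueness of the unstable eigenvalue at $k=\pm1$ comes from combining the implicit-function branch with Lemma~\ref{lem-nozero} (only one negative eigenvalue of $L_{M,T}$, hence a single critical wave number from which unstable branches can bifurcate), together with the fact that the even branch is the only one carrying unstable eigenvalues in the $\mathcal K^+$ class. The main obstacle will be justifying the derivative identity $\partial_k c_i\approx -\gamma_0$ \emph{uniformly} over $k\in[1,k_*]$: along this segment the critical layer $y_c=c(k)/b'(0)$ lies at distance $\sim\gamma_0\gamma_1\gamma_2$ off the real axis, so the Frobenius representation of $\phi_+$ must be shown to be stable under this small shift. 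This is precisely where the smallness of $\gamma_2$ (as flagged in the remark following Proposition~\ref{lem-neutral}) is essential, since it guarantees $c_i\ll\gamma_0$ and hence that the critical layer stays well inside the region where $b''/b$ is essentially a negative constant on the scale $\gamma_0$.
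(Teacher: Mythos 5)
Your overall strategy --- locate the eigenvalues as zeros of a Wronskian, apply the implicit function theorem at the neutral point $(c,k)=(0,k_*(M,T))$, show $\partial_k c_i\approx-\gamma_0$, and integrate from $k_*$ down to $1$ --- is exactly the paper's architecture, and your closing remark about keeping the critical layer inside the region where the singularity is resolved correctly identifies where the work lies. However, the central quantitative step is wrong as written. Your Feynman--Hellmann formula
\begin{equation*}
\partial_k c_i(k_*)= -\,\frac{2k_*\int_{\mathbb R}\phi_{k_*}^2\,dy}{\int_{\mathbb R}\frac{b''(y)}{b(y)^2}\phi_{k_*}^2\,dy}\cdot b'(0)
\end{equation*}
has a denominator that does not exist as an ordinary integral: near the inflection point $b''(y)\approx -2My\gamma_0^{-1}$ and $b(y)\approx y$, so $b''/b^2\approx -2M/(\gamma_0 y)$ is an odd, non-integrable singularity. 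Its principal value vanishes against the even weight $\phi_{k_*}^2$, and your claimed size $\sim M/\gamma_0^2$ is obtained by (incorrectly) treating $|b''/b^2|$ as bounded by $M\gamma_0^{-3}$ on $|y|\lesssim\gamma_0$. The correct object is the boundary value $\lim_{c_i\to0^+}\int b''(y)(b(y)-ic_i)^{-2}\phi^2\,dy$, whose nontrivial contribution comes from the Poisson-kernel limit and is governed by $\partial_v^3 b^{-1}(0)\approx-\gamma_0^{-1}$; this is precisely the paper's computation $\partial_{c_i}W_r\approx-\gamma_0^{-1}$, which requires splitting $W=I+II$, a Hardy-inequality argument near $v=0$, and the ``good derivative'' $\partial_G=\partial_{c_i}+i\partial_y/b'(0)$ to show the $\phi_1\phi_2$ corrections are lower order. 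Your attempt to repair the scaling by ``extracting powers of $b'(0)\approx1+M$'' does not work: $b'(0)=1+M\gamma_0(1+o(1))\approx1$, so there is no factor of $M$ to extract, and your formula does not actually produce $-\gamma_0$.

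Two smaller gaps: the assertion that oddness of $b$ forces any unstable eigenvalue to be purely imaginary is not immediate (oddness only pairs $c$ with $-\bar c$; concluding $c_r=0$ presupposes the uniqueness you are trying to prove --- the paper instead shows $W_i(ic_i,k)=0$ directly and invokes Lin's theorem for existence on the imaginary axis); and the uniqueness argument needs more than ``only one negative eigenvalue of $L_{M,T}$'' --- the paper passes to an analytic modification $\mathcal W=fW$ of the Wronskian and uses the Cauchy--Riemann equations together with $\partial_{c_i}\mathcal W_r\neq0$, $\partial_{c_r}\mathcal W_r=0$ on the imaginary axis to rule out other zeros. You would need to supply both the transversality estimate and this global uniqueness mechanism for the proof to close.
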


In what follows, we simplify $b_{M,\gamma_0,\gamma_1,\gamma_2}(T,y)$ to $b(y)$. 

The proof of Proposition \ref{lem-quanti} consists of the following steps. First, we show that $c$ is an eigenvalue of $\mathcal R_{b,k}$ if and only if the Wronskian $W(c,k)=0$, as defined in \eqref{eq-Wron}, and we provide some key properties of $W(c,k)$. Then, by applying the implicit function theorem, we establish the existence of a implicit curve $c_i(k)$ with $c_i(k_*(T))=0$ that
\begin{align}\label{eq-curve}
  W_r(i c_i(k),k)=0,\quad\frac{d c_i(k)}{dk}\le -\frac{1}{C}\gamma_0.
\end{align}
Since $k_*(T)=1+\frac{1}{C}\gamma_1\gamma_2$, this implies that $\mathcal R_{b,1}$ has an unstable eigenvalue $c=i\frac{1}{C}\gamma_0\gamma_1\gamma_2$. In the last step, we show the uniqueness of the unstable eigenvalue. 

In the remainder of this section, we provide the detailed proof of the steps outlined above. The proof is based on the special solution $\phi(y,k,c)$, given in Proposition \ref{prop-phi}, which solves \eqref{eq-Rayleigh}. Here we adopt a different notation from Section 2 to emphasize the solution's dependence on  $k$ and $c$. 

Since the solution $\phi(y,k,c)$ grows exponentially as $y\to\pm\infty$, we can define
\begin{align}\label{eq-psipm}
  \varphi^\pm(y,k,c)=&\phi(y,k,c)\int^y_{\pm\infty}\frac{1}{\phi^2(y',k,c)}dy'
\end{align}
which are independent solutions of \eqref{eq-Rayleigh} corresponding to $\phi(y,k,c)$, and $\varphi^\pm(y,k,c)$ decay to zero as $y\to\pm\infty$ separately.

Next, we introduce the Wronskian $W(c,k)=W_r(c,k)+iW_i(c,k)$. 
\begin{align}\label{eq-Wron}
  W(c,k)=\int_{-\infty}^{+\infty} \frac{1}{\phi^2(y',k,c)} dy'=
  \det\left(\begin{matrix}\varphi^{-}(y,k,c)&\varphi^{+}(y,k,c)\\
 \pa_y\varphi^{-}(y,k,c)&\pa_y\varphi^{+}(y,k,c)\end{matrix}\right),\ c_i>0,
\end{align}
and
\begin{equation}\label{eq-Wron-0}
  \begin{aligned}   
  &W(c_r,k)=\lim_{c_i\to0+}W(c,k)\\
  =&-\mathcal H\Big((b^{-1})''\Big)(c_r)+\int \frac{1}{(b(y)-c_r)^2}\left(\frac{1}{\phi_{1}^2(k,y,y_c)}-1\right) dy+i\pi(b^{-1})''(c_r),
  \end{aligned}
\end{equation}
where ${\mathcal H}$ is the Hilbert transform. By Lemma 5.4 of \cite{LiMasmoudiZhao2022critical}, $c$ is an eigenvalue of $\mathcal R_{b,k}$ if and only if $W(c,k)=0$.  

Recall that $b$ is odd and belongs to $\mathcal K^+$ class. By Theorem 1.5 of \cite{lin2003instability}, for any $0<k<k_*(T)$, there exists a purely imaginary eigenvalue $c=ic_i$ with $c_i>0$. Therefore, we  first focus on the zero point of $W(c,k)$ with $c_r=0$.

Since $\frac{b''}{b}$ is even, it follows that $\phi_{1}(y,k,0)$ is even as well. For $c_r=0$, one can easily check that if $\phi_2(y,k,c)$ satisfies \eqref{eq-phi2}, then $\bar\phi_2(-y,k,c)$ also satisfies \eqref{eq-phi2} with the same boundary condition. We write $\phi_2(y,k,c)=\phi_{2,r}(y,k,c)+i\phi_{2,i}(y,k,c)$. It holds that $\phi_{2,r}(y,k,c)$ is even $\phi_{2,i}(y,k,c)$ is odd. As a result, we have
\begin{align}\label{eq-psipm2}
  \phi(y,k,c)=-\bar\phi(-y,k,c),\quad \varphi^{+}(y,k,c)=\bar\varphi^{-}(-y,k,c), \text{ for }c_r=0.
\end{align}

Next, we show that $W_i(c,k)=0$ when $c_r=0$. We write
\begin{align*}
  W(c,k)=&\int_{-\infty}^{+\infty} \frac{1}{\phi^2(y',k,c)} dy'\\
  =&\int_{-\infty}^{+\infty} \frac{1}{\left(b(y')-c\right)^2}  dy'+\int_{-\infty}^{+\infty} \frac{1}{\left(b(y')-c\right)^2} \left(\frac{1}{\phi_1^2(y',k,0)\phi_2^2(y',k,c)}-1\right) dy'\eqdef I+II,
\end{align*}
both $I$ and $II$ are purely real. Indeed,
\begin{align*}
  I=&I_r+iI_i=\int_{-\infty}^{+\infty} \frac{1}{\left(b(y')-c\right)^2}  dy'  =\int_{-\infty}^{+\infty} \frac{1}{\left(v-c\right)^2}\pa_v \left(b^{-1}(v)\right)    dv= \int_{-\infty}^{+\infty} \frac{\pa_{v}^2 \left(b^{-1}(v)\right)}{v-c}   dv\\
  =&\int_{-\infty}^{+\infty} \frac{v\pa_{v}^2 \left(b^{-1}(v)\right)}{v^2+c_i^2}    dv +i\int_{-\infty}^{+\infty} \frac{c_i\pa_{v}^2 \left(b^{-1}(v)\right)}{v^2+c_i^2}   dv.
\end{align*}
As $b$ is odd, it is clear that $\pa_{v}^2 \left(b^{-1}(v)\right)$ is also odd, and then $I_i=0$. For  $II$,
\begin{align*}
  II=&II_r+iII_i=\int_{-\infty}^{+\infty} \frac{1}{\left(b(y')-c\right)^2} \left(\frac{1}{\phi_1^2(y',k,0)\phi_2^2(y',k,c)}-1\right) dy'\\
  =&\int_{-\infty}^{+\infty} \frac{b^2(y')-c_i^2+2ibc_i}{\left|b^2(y')+c_i^2\right|^2} \left(\frac{\phi_{2,r}^2(y',k,c)-\phi_{2,i}^2(y',k,c)-2i\phi_{2,r}(y',k,c)\phi_{2,i}(y',k,c)}{\phi_1^2(y',k,0)\left(\phi_{2,r}^2(y',k,c)+\phi_{2,i}^2(y',k,c)\right)^2}-1\right) dy',
\end{align*}
we have
\begin{align*}
  II_i=&\int_{-\infty}^{+\infty} \frac{2bc_i}{\left|b^2(y')+c_i^2\right|^2} \left(\frac{\phi_{2,r}^2(y',k,c)-\phi_{2,i}^2(y',k,c)}{\phi_1^2(y',k,0)\left(\phi_{2,r}^2(y',k,c)+\phi_{2,i}^2(y',k,c)\right)^2}-1\right) dy' \\
  &-\int_{-\infty}^{+\infty} \frac{b^2(y')-c_i^2}{\left|b^2(y')+c_i^2\right|^2} \left(\frac{2\phi_{2,r}(y',k,c)\phi_{2,i}(y',k,c)}{\phi_1^2(y',k,0)\left(\phi_{2,r}^2(y',k,c)+\phi_{2,i}^2(y',k,c)\right)^2}\right) dy'.
\end{align*}
Recall that $\phi_{2,r}(y,k,c)$ is even and $\phi_{2,i}(y,k,c)$ is odd. Consequently we have $II_i=0$, leading to $W_i(c)=0$ for $c=ic_i$. Therefore, to study the zero point of $W(c)$ with $c_r=0$, it suffices to to study the zero point of $W_r(ic_i)$. 

Next, to prove \eqref{eq-curve}, we show that 
\begin{align*}
  \pa_k W_r(ic_i,k)\approx-1,\quad \pa_{c_i}W_r(ic_i,k)\approx-\frac{1}{\gamma_0}, \text{ for } 0\le c_i\le\gamma_0\gamma_1\gamma_2,\ \frac{1}{2}\le k\le \frac{3}{2}.
\end{align*}
Here we will use some techniques from \cite{sinambela2023transition}.
  
Recall that $I_r$ is independent on $k$. We write
\begin{equation}\label{eq-Irci}
  \begin{aligned}   
  \pa_{c_i}I_r=&\int_{-\infty}^{+\infty} \pa_{c_i}\left(\frac{v}{v^2+c_i^2}\right)   \pa_{v}^2 \left(b^{-1}(v)\right) dv\\
  =&\int_{-\infty}^{+\infty} \pa_{v}\left(\frac{c_i}{v^2+c_i^2}\right)   \pa_{v}^2 \left(b^{-1}(v)\right) dv=-\int_{-\infty}^{+\infty}  \frac{c_i}{v^2+c_i^2}  \pa_{v}^3 \left(b^{-1}(v)\right) dv\\
  =&-\pi\pa_v^3b^{-1}(0)-\int_{-\infty}^{+\infty}  \frac{c_i}{v^2+c_i^2}  \left(\pa_{v}^3 \left(b^{-1}(v)\right)-\pa_v^3b^{-1}(0)\right) dv.    
  \end{aligned}
\end{equation}
For the second term, by using Hardy's inequality we have
\begin{align*}
  &\left|\int_{-\infty}^{+\infty}  \frac{c_i}{v^2+c_i^2}  \left(\pa_{v}^3 \left(b^{-1}(v)\right)-\pa_v^3b^{-1}(0)\right) dv\right|\\
  \le&\int_{|v|\le N c_i} \left|\frac{c_i}{v^2+c_i^2} \left(\pa_{v}^3 \left(b^{-1}(v)\right)-\pa_v^3b^{-1}(0)\right)\right| dv\\
  &+\int_{|v|> N c_i} \left|\frac{c_i}{v^2+c_i^2}  \pa_{v}^3 \left(b^{-1}(v)\right)\right|+\left|\frac{c_i}{v^2+c_i^2}\pa_v^3b^{-1}(0) \right| dv\\
  \le& \left(\int_{|v|\le N c_i} \left|\frac{vc_i}{v^2+c_i^2}\right|^2 dv \right)^{\frac{1}{2}}\left(\int_{|v|\le N c_i} \left|\pa_{v}^4 \left(b^{-1}(v)\right)\right|^2 dv \right)^{\frac{1}{2}}\\
  &+\int_{|v|> N c_i} \left|\frac{c_i}{v^2+c_i^2}  \pa_{v}^3 \left(b^{-1}(v)\right)\right|+\left|\frac{c_i}{v^2+c_i^2}\pa_v^3b^{-1}(0) \right| dv.
\end{align*}
It is easy to check that
\begin{align*}
   \pa_v^3b^{-1}(v)=\frac{2\left(b''(y)\right)^2-b'''(y)b'(y)}{\left(b'\left(y\right)\right)^4}|_{y=b^{-1}(v)}\\
  \pa_v^4b^{-1}(v)=\frac{-b''''(y)\left(b'(y)\right)^2+8b'''(y)b''(y)b'(y)-10 \left(b''(y)\right)^3}{\left(b'\left(y\right)\right)^6}|_{y=b^{-1}(v)}
\end{align*}
and
\begin{align*}
  \left|\pa_v^3b^{-1}(v)\right|\approx  \frac{1}{\gamma_0}e^{-C\frac{v^2}{\gamma_0^2}}+\frac{v^2}{\gamma_0^3}e^{-C\frac{v^2}{\gamma_0^2}} \lesssim \frac{1}{\gamma_0}\\
  \left|\pa_v^4b^{-1}(v)\right|\approx \frac{v}{\gamma_0^3}e^{-C\frac{v^2}{\gamma_0^2}}+\frac{v^3}{\gamma_0^5}e^{-C\frac{v^2}{\gamma_0^2}}+\gamma_2\frac{v}{\gamma_0^3\gamma_1^2}e^{-C\frac{v^2}{\gamma_0^2\gamma_1^2}}+\gamma_2\frac{v^3}{\gamma_0^5\gamma_1^4}e^{-C\frac{v^2}{\gamma_0^2\gamma_1^2}}. 
\end{align*}

It is follows that 
\begin{align*}
  \int_{|v|> N c_i} \left|\frac{c_i}{v^2+c_i^2}  \pa_{v}^3 \left(b^{-1}(v)\right)\right|+\left|\frac{c_i}{v^2+c_i^2}\pa_v^3b^{-1}(0) \right| dv\lesssim \frac{1}{N}\frac{1}{\gamma_0}.
\end{align*}
and
\begin{align*}
  \left(\int_{|v|\le N c_i} \left|\frac{vc_i}{v^2+c_i^2}\right|^2 dv \right)^{\frac{1}{2}}\lesssim |c_i|^{\frac{1}{2}},
\end{align*}
and
\begin{align*}
  \left(\int_{|v|\le N c_i} \left|\pa_{v}^4 \left(b^{-1}(v)\right)\right|^2 dv \right)^{\frac{1}{2}}\lesssim \gamma_2\max(\frac{N^{\frac{3}{2}}|c_i|^{\frac{3}{2}}}{\gamma_0^3\gamma_1^2},\frac{N^{\frac{7}{2}}|c_i|^{\frac{7}{2}}}{\gamma_0^5\gamma_1^4}),
\end{align*}
Therefore,
\begin{align*}
  \left(\int_{|v|\le N c_i} \left|\frac{vc_i}{v^2+c_i^2}\right|^2 dv \right)^{\frac{1}{2}}\left(\int_{|v|\le N c_i} \left|\pa_{v}^4 \left(b^{-1}(v)\right)\right|^2 dv \right)^{\frac{1}{2}}\lesssim \gamma_2\max(\frac{N^{\frac{3}{2}}|c_i|^{2}}{\gamma_0^3\gamma_1^2},\frac{N^{\frac{7}{2}}|c_i|^{4}}{\gamma_0^5\gamma_1^4}).
\end{align*}

By taking $1\ll N\ll \frac{1}{\gamma_2}$, we have
\begin{align*}
  \left|\int_{-\infty}^{+\infty}  \frac{c_i}{v^2+c_i^2}  \left(\pa_{v}^3 \left(b^{-1}(v)\right)-\pa_v^3b^{-1}(0)\right) dv\right|\ll\frac{1}{\gamma_0}.
\end{align*}
Then, it follows from \eqref{eq-Irci} and the fact $\pa_v^3b^{-1}(0)\approx -\frac{1}{\gamma_0}$ that
\begin{align*}
  \pa_{c_i}I_r\approx -\frac{1}{\gamma_0}.
\end{align*}

Next, we will give the estimate for $\pa_{c_i}II_r$ and show that $|\pa_{c_i}II|\lesssim \frac{1}{N}\frac{1}{\gamma_0}$, which gives $\pa_{c_i}W_r(c,k)\approx -\frac{1}{\gamma_0}$.

We introduce the following good derivative
\begin{align*}
  \pa_G=\pa_{c_i}+i \frac{\pa_y}{b'(0)}.
\end{align*}
We then deduce
\begin{align*}
  \pa_{c_i}II=&\int_{-\infty}^{+\infty} \pa_G\left(\frac{1}{\left(b(y')-c\right)^2} \left(\frac{1}{\phi_1^2(y',k,0)\phi_2^2(y',k,c)}-1\right) \right)dy'\\
  =&\int_{-\infty}^{+\infty}\left(\frac{1}{\phi_1^2(y',k,0)\phi_2^2(y',k,c)}-1\right) \pa_G\left(\frac{1}{\left(b(y')-c\right)^2} \right)\\
  +& \frac{1}{\left(b(y')-c\right)^2}\pa_G \left(\frac{1}{\phi_1^2(y',k,0)\phi_2^2(y',k,c)}-1\right)  dy'\\
  \eqdef&II_a+II_b.
\end{align*}
We have
\begin{align*}
  \left|\pa_G\left(\frac{1}{\left(b(y')-c\right)^2} \right)\right|= \left|2i \frac{1-\frac{b'(y')}{b'(0)}}{\left(b(y')-c\right)^3}\right|\lesssim \frac{\left\|b''\right\|_{L^\infty}}{{y'}^2+c_i^2}\lesssim\frac{1}{{y'}^2+c_i^2}.
\end{align*}
Thus by Proposition \ref{prop-phi}, we obtain
\begin{align*}
  \left|II_a\right|\lesssim 1.
\end{align*}

Let $\phi_*(y,k,c)=\phi_1(y,k,c)\phi_2(y,k,c)$ which satisfies
\begin{equation}
  \left\{
    \begin{array}{ll}
      \left(\phi_*'(y,k,c) \left(b(y)-ic_i\right)^2\right)'-k^2\phi_*(y,k,c)\left(b(y)-ic_i\right)^2=0,\\
      \phi_*(0,k,c)=1,\qquad\phi_*'(0,k,c)=0.
    \end{array}
  \right.
\end{equation}
Apply $\pa_G$ to both sides of the above equation, we get
\begin{equation}\label{eq-phi-G}
    \begin{aligned}   
  \left( \left(b(y)-ic_i\right)^2 \left(\pa_G\phi_*\right)'(y,k,c)\right)'=& k^2\left(\pa_G\phi_*(y,k,c)\right)\left(b(y)-ic_i\right)^2\\
  &- \left( \frac{\pa_G\left(b(y)-ic_i\right)^2}{\left(b(y)-ic_i\right)^2}\right)'\left(b(y)-ic_i\right)^2\phi_*'(y,k,c),     
    \end{aligned}
  \end{equation}  
with boundary condition
\begin{align*}
  \pa_G\phi_*(0,k,c)=0,\qquad(\pa_G\phi_*)'(0,k,c)=\frac{k^2}{b'(0)}.
\end{align*}

We have
\begin{align*}
  &\left( \left(b(y)-ic_i\right)^2 \left(\frac{\pa_G\phi_*(y,k,c)}{\phi_*(y,k,c)}\right)'\phi_*^2(y,k,c)\right)'\\
  =&- \left( \frac{\pa_G\left(b(y)-ic_i\right)^2}{\left(b(y)-ic_i\right)^2}\right)'\left(b(y)-ic_i\right)^2\phi_*'(y,k,c)\phi_*(y,k,c),
\end{align*}
and then
\begin{align*}
  &\frac{\pa_G\phi_*(y,k,c)}{\phi_*(y,k,c)}\\
  =&-\int_{0}^y \frac{c_i^2\frac{k^2}{b'(0)}}{\left(b(y')-ic_i\right)^2\phi_*^2(y',k,c)}  d y'\\
  &-\int_{0}^y \frac{\int_{0}^{y'} \left( \frac{\pa_G\left(b(y'')-ic_i\right)^2}{\left(b(y'')-ic_i\right)^2}\right)'\left(b(y'')-ic_i\right)^2\phi_*'(y'',k,c)\phi_*(y'',k,c) d y''}{\left(b(y')-ic_i\right)^2\phi_*^2(y',k,c)} d y'\\
  \eqdef&G_1+G_2.
\end{align*}
For $G_1$, it follows from Proposition \ref{prop-phi} that
\begin{align*}
  \left|G_1\right|\le \int_{0}^y \frac{c_i^2\frac{k^2}{b'(0)}}{\left|b(y')-ic_i\right|^2 \left|\phi_*(y',k,c)\right|^2}  d y'\lesssim \int_{0}^y \frac{c_i^2 }{(y')^2+c_i^2}d y' \lesssim \left|c_i\right|.
\end{align*}
For $G_2$, as $|b''|$ has uniform bound, we have
\begin{align*}
  \left|\left( \frac{\pa_G\left(b(y'')-ic_i\right)^2}{\left(b(y'')-ic_i\right)^2}\right)'\right|=&\left|\left( \frac{2i \left(\frac{b'(y'')}{b'(0)}-1\right)}{ b(y'')-ic_i }\right)'\right|=\left|2i\frac{\frac{b''(y'')}{b'(0)}\left(b(y'')-ic_i\right)-b'(y'')\left(\frac{b'(y'')}{b'(0)}-1\right)}{\left(b(y'')-ic_i\right)^2}\right|\\
  \lesssim& \frac{1}{\left|b(y'')-ic_i\right|}.
\end{align*}
Recall that $b$ is monotonic. By Proposition \ref{prop-phi}, we have
\begin{align*}
  \left|G_2\right|\lesssim& \left|\int_{0}^y \int_{0}^{y'} \left( \frac{\pa_G\left(b(y'')-ic_i\right)^2}{\left(b(y'')-ic_i\right)^2}\right)' \frac{\left(b(y'')-ic_i\right)^2\phi_*'(y'',k,c)\phi_*(y'',k,c)}{\left(b(y')-ic_i\right)^2\phi_*^2(y',k,c)}  d y'' d y'\right|\\
  \lesssim&\int_{0}^y \int_{0}^{y'} \frac{ |y''|+|c_i| }{\left|b(y')-ic_i\right|}  d y'' d y'\lesssim \left|y\right|^2 .
\end{align*}

As a result, it holds that 
\begin{align}\label{eq-phi*ci}
  \left|\frac{\pa_G\phi_*(y,k,c)}{\phi_*(y,k,c)}\right|\lesssim \left|c_i\right| + \left|y\right|^2 .
\end{align}

Then, we have
\begin{align*}
  \left|II_b\right|=&\left|\int_{-\infty}^{+\infty} \frac{1}{\left(b(y')-ic_i\right)^2} \frac{\pa_G\phi_*(y',k,c)}{\phi_*(y',k,c)} \frac{1}{\phi_*^2(y',k,c)} dy'\right|\\
  \lesssim&\int_{-\infty}^{+\infty}\left| \frac{\left|c_i\right| + \left|y\right|^2}{\left|b(y')-ic_i\right|^2}  \frac{1}{\left|\phi_*^2(y',k,c)\right|} \right|dy'\lesssim 1.
\end{align*}

Combing the estimate for $\pa_{c_i}I_r$ and $\left|\pa_{c_i}II\right|$, we have
\begin{align}\label{eq-W-ci}
  \pa_{c_i}W_r(c,k)\approx\pa_{c_i}I_r\approx-\frac{1}{\gamma_0}.
\end{align}

Next, we give the estimate for $\pa_k W_r(c,k)$. It is clear that
\begin{align*}
  \pa_k W_r(c,k)=\mathrm{Re}(\pa_kII).
\end{align*}
We have
\begin{align*}
  \pa_{k}II =&\int_{-\infty}^{+\infty} \frac{1}{\left(b(y')-c\right)^2}\pa_k \left(\frac{1}{\phi_1^2(y',k,0)\phi_2^2(y',k,c)}-1\right)  dy'\\
  =&-2\int_{-\infty}^{+\infty} \frac{1}{\left(b(y')-c\right)^2} \frac{\pa_k\phi_1(y',k,0)}{\phi_1(y',k,0)}\frac{1}{\phi_1^2(y',k,0)\phi_2^2(y',k,c)} dy'\\
  &-2\int_{-\infty}^{+\infty} \frac{1}{\left(b(y')-c\right)^2} \frac{\pa_k\phi_2(y',k,0)}{\phi_2(y',k,0)}\frac{1}{\phi_1^2(y',k,0)\phi_2^2(y',k,c)} dy'\\
  \eqdef&K_1+K_2.
\end{align*}

We first study $K_1$. Similar to \eqref{eq-phi-G}, we derive the equation for $\pa_k\phi_1(y,k,c)$,
\begin{align*}
  \left( b^2(y)\left(\frac{\pa_k\phi_1(y,k,0)}{\phi_1(y,k,0)}\right)'\phi_1^2(y,k,0)\right)'=&2k \phi_1^2(y,k,0) b^2(y) ,
\end{align*}
with boundary condition
\begin{align*}
  \pa_k\phi_1(0,k,0)=0,\qquad(\pa_k\phi_1)'(0,k,0)=0.
\end{align*}
It follows that
\begin{align*}
  &\frac{\pa_k\phi_1(y,k,0)}{\phi_1(y,k,0)}=2\int_{0}^y \frac{1}{b^2(y')\phi_1^2(y',k,0)}\int_{0}^{y'}  2kb^2(y'')\phi_1^2(y'',k,0) d y'' d y'.
\end{align*}
Thus, it is clear that, $\pa_k\phi_1(y,k,0)\ge0$. Moreover, by Proposition \ref{prop-phi}, we have
\begin{align}\label{eq-est-phi1-k}
  \frac{\pa_k\phi_1(y,k,0)}{\phi_1(y,k,0)}\ge \frac{1}{C}|y|^2,\text{ for }|y|\le1,\qquad\frac{\pa_k\phi_1(y,k,0)}{\phi_1(y,k,0)}\le C|y|^2,\text{ for }y\in\mathbb R,\\
  \left| \left(\frac{\pa_k\phi_1(y,k,0)}{\phi_1(y,k,0)}\right)'\right|\le C|y|,\text{ for }y\in\mathbb R.\label{eq-est-phi1-k2}
\end{align}

Then we write
\begin{align*}
  K_1=&-2\int_{-\infty}^{+\infty} \frac{b^2(y')-c_i^2+2ib(y')c_i}{\left(b^2(y')+c_i^2\right)^2} \frac{\pa_k\phi_1(y',k,0)}{\phi_1(y',k,0)}\\
  &\qquad\qquad\qquad\cdot\frac{\phi_{2,r}^2(y',k,c)-\phi_{2,i}^2(y',k,c)-2i\phi_{2,r}(y',k,c)\phi_{2,i}(y',k,c)}{\phi_1^2(y',k,0)\phi_2^2(y',k,c)} dy'.
\end{align*}
It holds that
\begin{align*}
  \mathrm{Re}(K_1)=&-2\int_{-\infty}^{+\infty} \frac{b^2(y')}{\left(b^2(y')+c_i^2\right)^2} \frac{\pa_k\phi_1(y',k,0)}{\phi_1(y',k,0)}\frac{\phi_{2,r}^2(y',k,c)-\phi_{2,i}^2(y',k,c)}{\phi_1^2(y',k,0)\phi_2^2(y',k,c)} dy'\\
  &+2\int_{-\infty}^{+\infty} \frac{c_i^2}{\left(b^2(y')+c_i^2\right)^2} \frac{\pa_k\phi_1(y',k,0)}{\phi_1(y',k,0)}\frac{\phi_{2,r}^2(y',k,c)-\phi_{2,i}^2(y',k,c)}{\phi_1^2(y',k,0)\phi_2^2(y',k,c)} dy'\\
  &-8\int_{-\infty}^{+\infty} \frac{b(y')c_i}{\left(b^2(y')+c_i^2\right)^2} \frac{\pa_k\phi_1(y',k,0)}{\phi_1(y',k,0)}\frac{\phi_{2,r}(y',k,c)\phi_{2,i}(y',k,c)}{\phi_1^2(y',k,0)\phi_2^2(y',k,c)} dy'\\
  \eqdef&K_{1,1}+K_{1,2}+K_{1,3}.
\end{align*}
Recall Proposition \ref{prop-phi}. One can easily check that
\begin{align*}
  K_{1,1}\approx-1,\qquad \left|K_{1,2}\right|+\left|K_{1,3}\right|\lesssim c_i.
\end{align*}

Next, we turn to $K_2$. Also similar to \eqref{eq-phi-G}, we derive the equation for $\pa_k\phi_2(y,k,c)$ (we brief $\phi_1(y,k,c)$, $\phi_2(y,k,c)$ to $\phi_1(y)$, $\phi_2(y)$), 
\begin{align*}
  &\left( \left(b(y)-ic_i\right)^2 \phi_1^2(y)\left(\frac{\pa_k\phi_2(y)}{\phi_2(y)}\right)'\phi_2^2(y)\right)'\\
  =&\frac{2ic_i b'(y) \left(b(y)- ic_i\right)}{b(y)}\pa_k\phi_{1}(y)\phi_{1}'(y)\phi_{2}^2(y)-\frac{2ic_i b'(y) \left(b(y)- ic_i\right)}{b(y)}\phi_{1}(y)\pa_k\phi_{1}'(y)\phi_{2}^2(y)\\
  &-2\frac{\pa_k\phi_1'(y)\phi_1(y)-\phi_1'(y)\pa_k\phi_1(y)}{\phi_1^2(y)}\left(b(y)-ic_i\right)^2\phi_2'(y)\phi_2(y)\phi_1^2(y),
\end{align*}
with boundary condition
\begin{align*}
  \pa_k\phi_2(0)=0,\qquad(\pa_k\phi_2)'(0)=0.
\end{align*}
It holds that
\begin{align*}
  \frac{\pa_k\phi_2(y)}{\phi_2(y)}=&\int_{0}^y \frac{\int_{0}^{y'} \frac{2ic_i b'(y'') \left(b(y'')- ic_i\right)}{b(y'')}\pa_k\phi_{1}(y'')\phi_{1}'(y'')\phi_{2}^2(y'')  d y''}{\left(b(y')-ic_i\right)^2\phi_1^2(y')\phi_2^2(y')} d y'\\
  &-\int_{0}^y \frac{\int_{0}^{y'} \frac{2ic_i b'(y'') \left(b(y'')- ic_i\right)}{b(y'')}\phi_{1}(y'')\pa_k\phi_{1}'(y'')\phi_{2}^2(y'')  d y''}{\left(b(y')-ic_i\right)^2\phi_1^2(y')\phi_2^2(y')} d y'\\
  &-\int_{0}^y \frac{\int_{0}^{y'} 2\left(\pa_k\phi_1'(y'')\phi_1(y'')-\phi_1'(y'')\pa_k\phi_1(y'')\right)\left(b(y'')-ic_i\right)^2\phi_2'(y'')\phi_2(y'') d y''}{\left(b(y')-ic_i\right)^2\phi_1^2(y')\phi_2^2(y')} d y'\\
  =&-\int_{0}^y \frac{\int_{0}^{y'} \frac{2ic_i b'(y'') \left(b(y'')- ic_i\right)}{b(y'')}\phi_{1}^2(y'') \left(\frac{\pa_k\phi_{1}(y'')}{\phi_{1}(y'')}\right)'\phi_{2}^2(y'')  d y''}{\left(b(y')-ic_i\right)^2\phi_1^2(y')\phi_2^2(y')} d y'\\
  &-\int_{0}^y \frac{\int_{0}^{y'} 2\left(\frac{\pa_k\phi_{1}(y'')}{\phi_{1}(y'')}\right)'\phi_1^2(y'')\left(b(y'')-ic_i\right)^2\phi_2'(y'')\phi_2(y'') d y''}{\left(b(y')-ic_i\right)^2\phi_1^2(y')\phi_2^2(y')} d y'.
\end{align*}
By using \eqref{eq-est-phi1-k2} and Proposition \ref{prop-phi}, one can check that
\begin{align*}
  \left|\frac{\pa_k\phi_2(y)}{\phi_2(y)}\right|\lesssim c_i(|y|+|y|^3).
\end{align*}
Then we can see that
\begin{align*}
  \left|K_2\right|\lesssim&\int_{-\infty}^{+\infty} \frac{c_i(|y|+|y|^2)}{\left|b(y')-c\right|^2} \frac{1}{\phi_1^2(y',k,0)} dy'\lesssim \left|c_i\ln c_i\right|.
\end{align*}

Combing the above estimates, we get
\begin{align*}
  \pa_k W_r(c,k)=\mathrm{Re}(\pa_kII)\approx-1.
\end{align*}

Then we finished the proof of \eqref{eq-curve}.

The proof for existence of unstable eigenvalue in \cite{lin2003instability} inherently includes the locally uniqueness of the implicit curve of Wronskian (see also \cite{sinambela2023transition}). Since the background flow we are considering is odd, the proof of the existence of the unstable eigenvalue is relatively simple. Below, we provide a brief explanation of the uniqueness of the implicit curve. 

The Wronskian $W(c,k)$ is not necessarily analytic with respect to $c$. However, from  Remark B.1 of \cite{LiMasmoudiZhao2022critical}, there exists $Y$ such that $f(c)= \frac{1}{\varphi^+(Y,k,c)\varphi^-(-Y,k,c)}$ has no zeros, and
\begin{align*}
  \mathcal W(c,k)=f(c)W(c,k)
\end{align*}
is analytic for $c_i>0$ and continuous for $c_i\ge0$. Moreover $\mathcal W(c,k)$ has same zeros as $W(c,k)$. Here $\varphi^\pm$ is given in \eqref{eq-psipm}.  By \eqref{eq-psipm2} we know that $f(c)$ is real at $c_r=0$. 

Using \eqref{eq-W-ci} and the Cauchy–Riemann equations, for $c=ic_i$, $0<c_i\le\gamma_0\gamma_1\gamma_2$, we have
\begin{equation}\label{eq-CR}
  \begin{aligned}   
  \pa_{c_r}\mathcal W_i(c,k)=&-\pa_{c_i}\mathcal W_r(c,k)=-\pa_{c_i}f(c)W_r(c,k)-f(c)\pa_{c_i}W_r(c,k)\\
  =&-f(c)\pa_{c_i}W_r(c,k)\neq0,\\
  \pa_{c_r}\mathcal W_r(c,k)=&\pa_{c_i}\mathcal W_i(c,k)=\pa_{c_i} \left(f(c)W_r(c,k)\right)=0.
  \end{aligned}
\end{equation}
From \eqref{eq-phi*ci} and Proposition \ref{prop-phi}, one can easily check that $\left|\pa_{c_i}f(c)\right|$ has uniform bound for $c=ic_i$, $0\le c_i\le\gamma_0\gamma_1\gamma_2$ (refer to Remark B.1 of \cite{LiMasmoudiZhao2022critical}). Consequently, \eqref{eq-CR}  holds for $c=0$ as well.  By employing the two-dimensional implicit function theorem, we conclude that for each $k$, there exists only one zero point $c(k)$ such that $\mathcal W_r(c(k),k)=0$, and the implicit cure is given in \eqref{eq-curve}.

This complete the proof of Proposition \ref{lem-quanti}. By taking $\gamma=\frac{1}{C}\gamma_0\gamma_1\gamma_2$, the result of Theorem \ref{thm} follows from Proposition \ref{lem-neutral} and \ref{lem-quanti}.

\begin{appendix}
\section{Estimate of solution to homogeneous Rayleigh}
In this appendix, we introduce a special solution of \eqref{eq-Rayleigh} and list its properties, which are widely used throughout the proofs in this paper.
  \begin{proposition}\label{prop-phi}
  For monotonic shear flow $b(y)$ and wave number $k\neq0$, there exist $0<c_{0,k}\le1$, such that for $c=c_r+ic_i\in\mathbb C$ with $0\le c_i\le c_{0,k}$, equation \ref{eq-Rayleigh} has a regular solution
\begin{align*}
  \phi(y,k,c)=\left(b(y)-c\right)\phi_1(y,k,c_r)\phi_2(y,k,c),
\end{align*}
  which satisfies $\phi(y,k,c)|_{y=y_c}=ic_i$ and $\phi'(y,k,c)|_{y=y_c}=b'(y_c)$ with $y_c=b^{-1}(c_r)$. Here $\phi_1(y,k,c_r)$ is a real function that solves
  \begin{equation}\label{eq-phi1}
    \left\{
      \begin{array}{ll}
        \pa_y\left(\left(b(y)-c_r\right)^2\phi'_{1}(y,k,c_r)\right)-k^2\left(b(y)-c_r\right)^2 \phi_{1}(y,k,c_r)=0,\\
       \phi_{1}(y,k,c_r)|_{y=y_c}=1,\quad\pa_y \phi_{1}(y,k,c_r)|_{y=y_c}=0,
      \end{array}
    \right.
  \end{equation}
and $\phi_2(y,k,c)$ solves
\begin{equation}\label{eq-phi2}
  \left\{
    \begin{array}{l}
      \pa_y\Big(\left(b(y)-c\right)^2\phi_{1}^2(y,k,c_r)\phi_{2}'(y,k,c)\Big) +\frac{2ic_i b'(y) \left(b(y)-c\right)}{b(y)-c_r}\phi_{1}(y,k,c_r)\phi_{1}'(y,k,0)\phi_{2}(y,k,c)=0,\\
      \phi_{2}(y,k,c)|_{y=y_c}=1,\quad\phi'_{2}(y,k,c)|_{y=y_c}=0.
    \end{array}
  \right.
\end{equation}
It holds that
\begin{align}
      &\phi_{1}(y,k,c_r)\ge1,\quad (y-y_c)\phi'_{1}(y,k,c_r)\geq 0,\label{eq-phi1-est-1}\\
   &C^{-1}e^{C^{-1}|k|(|y-y_c|)}\le\phi_{1}(y,k,c_r)\le Ce^{C|k||y-y_c|},\label{eq-phi1-est-2}\\
    &|\phi'_{1}(y,k,c_r)|\leq C|k|\min\{|k|(y-y_c),1\}\phi_{1}(y,k,c_r),\label{eq-phi1-est-3}\\
    &|\phi''_{1}(y,k,c_r)|\leq Ck^2\phi_{1}(y,k,c_r),\\
    & 0\leq \phi_{1}(y,k,c_r)-1\le C\min \{1, |k|^2|y-y_c|^2\}\phi_{1}(y,k,c_r),\label{eq-phi1-est-4}\\
    &\frac{\phi_{1}(y,k,c_r)}{\phi_{1}(y_1,k,c_r)}\le Ce^{-C|k|(y-y_1)}\text{ for }y_1\le y\le y_c,\label{eq-phi1-est-5}\\
     &\frac{\phi_{1}(y,k,c_r)}{\phi_{1}(y_1,k,c_r)}\le Ce^{-C|k|(y_1-y)}\text{ for }y_1\ge y\ge y_c,  \label{eq-phi1-est-6}
\end{align}
  \begin{align}
    &|\phi_2(y,k,c)-1|\le C\min \{|k|c_i,|k|^2c_i|y-y_c|, |k|^2|y-y_c|^2\},\label{eq-phi2-est-1}\\
      &|\phi'_2(y,k,c)|\le C|k|^2\min \{c_i, |y-y_c|\},\label{eq-phi2-est-2}\\
      & \|\phi''_2(y,k,c)\|_{L^\infty_y}\le C|k|^2,\label{eq-phi2-est-3}\\
      & C^{-1}|y-c|e^{C|k||y-y_c|}\le|\phi(y,k,c)|\le |y-c|e^{C|k||y-y_c|},\label{eq-phi-est-0}
  \end{align} 
  where $C>0$ is a constant depends only on $\sup_y b'(t,y)$ and $\inf_y b'(t,y)$.
\end{proposition}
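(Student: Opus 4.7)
My plan is to prove Proposition \ref{prop-phi} constructively, driven by the factorization $\phi = (b-c)\phi_1\phi_2$, which pulls the Rayleigh singularity $b(y)-c$ out of $\phi$ explicitly. Substituting $\phi = (b-c)\psi$ into the Rayleigh equation $(b-c)(\phi''-k^2\phi)-b''\phi = 0$ reduces it, after a short calculation, to $((b-c)^2\psi')' - k^2(b-c)^2\psi = 0$. Further setting $\psi = \phi_1(y,k,c_r)\phi_2(y,k,c)$ and using \eqref{eq-phi1} to absorb the $\phi_1''$ contribution yields exactly \eqref{eq-phi2}. The prescribed boundary data at $y=y_c$ combined with $b(y_c)-c = -ic_i$ give $\phi(y_c) = ic_i$ and $\phi'(y_c) = b'(y_c)$, so what remains is to construct $\phi_1$, then $\phi_2$, and to prove the listed bounds.

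First I would construct $\phi_1(y,k,c_r)$ by iterating the integral form
\begin{equation*}
\phi_1(y) = 1 + k^2 \int_{y_c}^y \frac{1}{(b(y_1)-c_r)^2}\int_{y_c}^{y_1}(b(y_2)-c_r)^2 \phi_1(y_2)\,dy_2\,dy_1,
\end{equation*}
which is well-defined because the inner double zero at $y_2=y_c$ cancels the outer double pole at $y_1 = y_c$. The lower bound $\phi_1 \geq 1$ and the monotonicity in \eqref{eq-phi1-est-1} are immediate from positivity of the integrand. For the exponential bounds \eqref{eq-phi1-est-2}--\eqref{eq-phi1-est-6}, I would change variables to $v = b(y)-c_r$ so that \eqref{eq-phi1} becomes an equation comparable to $\tilde\phi'' = k^2 \tilde\phi$ with smooth coefficients across $v=0$; comparison with the model solutions $e^{\pm |k||v|}$ together with a Gr\"onwall argument delivers all exponential and derivative bounds, with constants depending only on $\sup b'$ and $\inf b'$. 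These $\phi_1$-estimates are essentially those established in \cite{WeiZhangZhao2018} and \cite{LiMasmoudiZhao2022critical} for the same class of monotonic shear flows, and I would cite and adapt them rather than redo them.

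Next I would construct $\phi_2(y,k,c)$ by rewriting \eqref{eq-phi2} as the fixed-point equation
\begin{equation*}
\phi_2(y) = 1 - \int_{y_c}^y \frac{1}{(b(y_1)-c)^2 \phi_1^2(y_1)}\int_{y_c}^{y_1}\frac{2ic_i\, b'(y_2)(b(y_2)-c)}{b(y_2)-c_r}\phi_1(y_2)\phi_1'(y_2)\phi_2(y_2)\,dy_2\,dy_1,
\end{equation*}
and solving it by Banach fixed-point on a small ball around the constant $1$ in a weighted $L^\infty$ norm. The crucial quantitative point is that the apparent singularity $(b-c)^{-2}$ is tempered both by the explicit $c_i$ in the numerator and by the vanishing of $\phi_1'$ at $y_c$, so the kernel effectively behaves like $c_i|y_2 - y_c|/((y_1 - y_c)^2 + c_i^2)$, which is integrable uniformly in $c_i \geq 0$. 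This gives the three-regime bound \eqref{eq-phi2-est-1} after a split into $|y-y_c|\leq c_i$ and $|y-y_c|\geq c_i$, while \eqref{eq-phi2-est-2} and \eqref{eq-phi2-est-3} follow by differentiating the integral form; $c_{0,k}$ is chosen exactly so that the resulting contraction constant is below $1$. Finally, \eqref{eq-phi-est-0} is immediate from $|b(y)-c|\approx |y-c|$ (by monotonicity of $b$) together with \eqref{eq-phi1-est-2} and $|\phi_2|\approx 1$.

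The main obstacle will be keeping all estimates uniform as $c_i \to 0^+$: the kernel $(b-c)^{-2}$ blows up on the real axis, and only the combined cancellation from the $c_i$ prefactor and the zero of $\phi_1'$ at $y_c$ prevents divergence. Extracting the three sharp alternatives in \eqref{eq-phi2-est-1} with the correct powers of $k$ and $|y-y_c|$ forces a careful regional decomposition rather than a black-box Gr\"onwall estimate, and it is this quantitative bookkeeping, rather than any conceptual difficulty, that really determines both $c_{0,k}$ and the sharp constants in the remaining bounds.
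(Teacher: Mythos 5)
Your proposal is correct and takes essentially the same route as the paper, which does not prove Proposition \ref{prop-phi} itself but defers entirely to Proposition 5.3 of \cite{LiMasmoudiZhao2022critical}: the factorization $\phi=(b-c)\phi_1\phi_2$, the Volterra/Picard construction of $\phi_1$ with comparison to $e^{\pm|k||y-y_c|}$, and the contraction for $\phi_2$ exploiting the explicit $c_i$ prefactor together with $\phi_1'(y_c)=0$ are exactly the ingredients of that cited proof, and your algebra reducing the Rayleigh equation to \eqref{eq-phi2} checks out.
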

For the proof, we refer the readers to Proposition 5.3 of \cite{LiMasmoudiZhao2022critical} (also, Proposition 2.1 of \cite{li2023asymptotic}).
\end{appendix}

\bibliographystyle{siam.bst} 
\bibliography{references.bib}

\begin{thebibliography}{10}

\bibitem{BGM2017}
{\sc J.~Bedrossian, P.~Germain, and N.~Masmoudi}, {\em On the stability
  threshold for the 3{D} {C}ouette flow in {S}obolev regularity}, Ann. of Math.
  (2), 185 (2017), ~541--608.

\bibitem{BGM2020}
{\sc J.~Bedrossian, P.~Germain, and N.~Masmoudi}, {\em Dynamics near the
  subcritical transition of the 3{D} {C}ouette flow {I}: {B}elow threshold
  case}, Mem. Amer. Math. Soc., 266 (2020), ~v+158.

\bibitem{bedrossian2024uniform}
{\sc J.~Bedrossian, S.~He, S.~Iyer, and F.~Wang}, {\em Uniform inviscid damping
  and inviscid limit of the 2d navier-stokes equation with navier boundary
  conditions}, arXiv preprint arXiv:2405.19249,  (2024).

\bibitem{BM2015}
{\sc J.~Bedrossian and N.~Masmoudi}, {\em Inviscid damping and the asymptotic
  stability of planar shear flows in the 2{D} {E}uler equations}, Publ. Math.
  Inst. Hautes \'{E}tudes Sci., 122 (2015), ~195--300.

\bibitem{BMV2016}
{\sc J.~Bedrossian, N.~Masmoudi, and V.~Vicol}, {\em Enhanced dissipation and
  inviscid damping in the inviscid limit of the {N}avier-{S}tokes equations
  near the two dimensional {C}ouette flow}, Arch. Ration. Mech. Anal., 219
  (2016), ~1087--1159.

\bibitem{beekie2024transition}
{\sc R.~Beekie and S.~He}, {\em Transition threshold for strictly monotone
  shear flows in sobolev spaces}, arXiv preprint arXiv:2409.09219,  (2024).

\bibitem{bian2024instability}
{\sc D.~Bian and E.~Grenier}, {\em Instability of shear layers and prandtl's
  boundary layers}, arXiv preprint arXiv:2401.15679,  (2024).

\bibitem{bian2024boundary}
{\sc D.~Bian, E.~Grenier, N.~Masmoudi, and W.~Zhao}, {\em Boundary driven
  instabilities of couette flows}, arXiv preprint arXiv:2409.00307,  (2024).

\bibitem{Case1960}
{\sc K.~M. Case}, {\em Stability of inviscid plane {C}ouette flow}, Phys.
  Fluids, 3 (1960), ~143--148.

\bibitem{CWZZ2023}
{\sc Q.~Chen, D.~Wei, P.~Zhang, and Z.~Zhang}, {\em Nonlinear inviscid damping
  for 2-d inhomogeneous incompressible euler equations}, arXiv preprint
  arXiv:2303.14858, to appear in JEMS,  (2023).

\bibitem{chen2019linear}
{\sc Q.~Chen, D.~Wei, and Z.~Zhang}, {\em Linear stability of pipe poiseuille
  flow at high reynolds number regime}, Communications on Pure and Applied
  Mathematics,  (2020).

\bibitem{CWZ2022}
{\sc Q.~Chen, D.~Wei, and Z.~Zhang}, {\em Linear inviscid damping and enhanced
  dissipation for monotone shear flows}, Communications in Mathematical
  Physics,  (2022), ~1--62.

\bibitem{ChenWeiZhang2020}
{\sc Q.~Chen, D.~Wei, and Z.~Zhang}, {\em Transition threshold for the 3{D}
  {C}ouette flow in a finite channel}, Mem. Amer. Math. Soc., 296 (2024).

\bibitem{dikii1964}
{\sc L.~Dikii}, {\em On the stability of plane-parallel couette flow}, Journal
  of Applied Mathematics and Mechanics, 28 (1964), ~479--483.

\bibitem{fjortoft1950application}
{\sc R.~Fj{\o}rtoft}, {\em Application of integral theorems in deriving
  criteria of stability for laminar flows and for the baroclinic circular
  vortex}, Geofys. Publ., 17 (1950), ~1--52.

\bibitem{Grenier2000}
{\sc E.~Grenier}, {\em On the nonlinear instability of {E}uler and {P}randtl
  equations}, Comm. Pure Appl. Math., 53 (2000), ~1067--1091.

\bibitem{GGN2016D}
{\sc E.~Grenier, Y.~Guo, and T.~T. Nguyen}, {\em Spectral instability of
  characteristic boundary layer flows}, Duke Math. J., 165 (2016), ~3085--3146.

\bibitem{GGN2016}
{\sc E.~Grenier, Y.~Guo, and T.~T. Nguyen}, {\em Spectral instability of
  general symmetric shear flows in a two-dimensional channel}, Adv. Math., 292
  (2016), ~52--110.

\bibitem{GNRS2020}
{\sc E.~Grenier, T.~T. Nguyen, F.~Rousset, and A.~Soffer}, {\em Linear inviscid
  damping and enhanced viscous dissipation of shear flows by using the
  conjugate operator method}, J. Funct. Anal., 278 (2020), ~108339, 27.

\bibitem{heisenberg1924stabilitat}
{\sc W.~Heisenberg}, {\em {\"U}ber stabilit{\"a}t und turbulenz von
  fl{\"u}ssigkeitsstr{\"o}men}, vol.~74, 1924.

\bibitem{Howard1961}
{\sc L.~N. Howard}, {\em Note on a paper of {J}ohn {W}. {M}iles}, J. Fluid
  Mech., 10 (1961), ~509--512.

\bibitem{IonescuJia2020cmp}
{\sc A.~D. Ionescu and H.~Jia}, {\em Inviscid damping near the {C}ouette flow
  in a channel}, Comm. Math. Phys., 374 (2020), ~2015--2096.

\bibitem{IJ2020}
{\sc A.~D. Ionescu and H.~Jia}, {\em Non-linear inviscid damping near monotonic
  shear flows}, Acta Math., 230 (2023), ~321--399.

\bibitem{itoh1981secondary}
{\sc N.~Itoh}, {\em Secondary instability of laminar flows}, Proceedings of the
  Royal Society of London. A. Mathematical and Physical Sciences, 375 (1981),
  ~565--578.

\bibitem{Jia2020arma}
{\sc H.~Jia}, {\em Linear inviscid damping in {G}evrey spaces}, Arch. Ration.
  Mech. Anal., 235 (2020), ~1327--1355.

\bibitem{Jia2020siam}
{\sc H.~Jia}, {\em Linear inviscid damping near monotone shear flows}, SIAM J.
  Math. Anal., 52 (2020), ~623--652.

\bibitem{Jiahao2022}
{\sc H.~Jia}, {\em Uniform {L}inear {I}nviscid {D}amping and {E}nhanced
  {D}issipation {N}ear {M}onotonic {S}hear {F}lows in {H}igh {R}eynolds
  {N}umber {R}egime ({I}): {T}he {W}hole {S}pace {C}ase}, J. Math. Fluid Mech.,
  25 (2023), ~42.

\bibitem{Kelvin1887}
{\sc L.~Kelvin}, {\em Stability of fluid motion: rectilinear motion of viscous
  fluid between two parallel plates}, Phil. Mag, 24 (1887), ~188--196.

\bibitem{landahl1975wave}
{\sc M.~T. Landahl}, {\em Wave breakdown and turbulence}, SIAM Journal on
  Applied Mathematics, 28 (1975), ~735--756.

\bibitem{LMZ2022G}
{\sc H.~Li, N.~Masmoudi, and W.~Zhao}, {\em Asymptotic stability of
  two-dimensional couette flow in a viscous fluid}, arXiv preprint
  arXiv:2208.14898,  (2022).

\bibitem{LiMasmoudiZhao2022critical}
{\sc H.~Li, N.~Masmoudi, and W.~Zhao}, {\em A dynamical approach to the study
  of instability near {C}ouette flow}, Comm. Pure Appl. Math., 77 (2024),
  ~2863--2946.

\bibitem{li2023asymptotic}
{\sc H.~Li and W.~Zhao}, {\em Asymptotic {S}tability in the {C}ritical {S}pace
  of 2{D} {M}onotone {S}hear {F}low in the {V}iscous {F}luid}, Comm. Math.
  Phys., 405 (2024), ~Paper No. 267.

\bibitem{LiWeiZhang2020}
{\sc T.~Li, D.~Wei, and Z.~Zhang}, {\em Pseudospectral bound and transition
  threshold for the 3{D} {K}olmogorov flow}, Comm. Pure Appl. Math., 73 (2020),
  ~465--557.

\bibitem{LinCC1955}
{\sc C.~C. Lin}, {\em The theory of hydrodynamic stability}, Cambridge, at the
  University Press, 1955.

\bibitem{lin2003instability}
{\sc Z.~Lin}, {\em Instability of some ideal plane flows}, SIAM journal on
  mathematical analysis, 35 (2003), ~318--356.

\bibitem{LinZeng2011}
{\sc Z.~Lin and C.~Zeng}, {\em Inviscid dynamical structures near {C}ouette
  flow}, Arch. Ration. Mech. Anal., 200 (2011), ~1075--1097.

\bibitem{MWWZ2024}
{\sc N.~Masmoudi, Y.~Wang, D.~Wu, and Z.~Zhang}, {\em Tollmien-{S}chlichting
  waves in the subsonic regime}, Proc. Lond. Math. Soc. (3), 128 (2024), ~Paper
  No. e12588, 112.

\bibitem{MasmoudiZhao2019}
{\sc N.~Masmoudi and W.~Zhao}, {\em Stability threshold of two-dimensional
  couette flow in sobolev spaces}, Annales de l'Institut Henri Poincar\'e C,
  Analyse Non lin\'eaire, 39 (2022), ~245--325.

\bibitem{MasmoudiZhao2020}
{\sc N.~Masmoudi and W.~Zhao}, {\em Nonlinear inviscid damping for a class of
  monotone shear flows in a finite channel}, Ann. of Math. (2), 199 (2024),
  ~1093--1175.

\bibitem{Orr1907}
{\sc W.~M. Orr}, {\em The stability or instability of the steady motions of a
  perfect liquid and of a viscous liquid}, Proc. Ir. Acad. Sect. A, Math
  Astron. Phys. Sci, 27 (1907), ~9--68.

\bibitem{Rayleigh1880}
{\sc L.~Rayleigh}, {\em On the {S}tability, or {I}nstability, of certain
  {F}luid {M}otions}, Proc. Lond. Math. Soc., 11 (1879/80), ~57--70.

\bibitem{reynolds1883}
{\sc O.~Reynolds}, {\em Xxix. an experimental investigation of the
  circumstances which determine whether the motion of water shall be direct or
  sinuous, and of the law of resistance in parallel channels}, Philosophical
  Transactions of the Royal society of London,  (1883), ~935--982.

\bibitem{RosSat1966}
{\sc S.~Rosencrans and D.~Sattinger}, {\em On the spectrum of an operator
  occurring in the theory of hydrodynamic stability}, Journal of Mathematics
  and Physics, 45 (1966), ~289--300.

\bibitem{schlichting1933}
{\sc H.~Schlichting}, {\em Laminare strahlenausbreitung}, Z. Angew Math. Mech.,
  13 (1933), ~260--263.

\bibitem{sinambela2023transition}
{\sc D.~Sinambela and W.~Zhao}, {\em The transition to instability for stable
  shear flows in inviscid fluids}, arXiv preprint arXiv:2303.15925,  (2023).

\bibitem{Sommerfeld1908}
{\sc A.~Sommerfeld}, {\em Ein beitrag zur hydrodynamischen erkl{\"a}rung der
  turbulenten fl{\"u}ssigkeitsbewegung}, Atti del IV Congresso internazionale
  dei matematici,  (1908), ~116--124.

\bibitem{squire1933}
{\sc H.~B. Squire}, {\em On the stability for three-dimensional disturbances of
  viscous fluid flow between parallel walls}, Proceedings of the Royal Society
  of London. Series A, Containing Papers of a Mathematical and Physical
  Character, 142 (1933), ~621--628.

\bibitem{tollmien1930entstehung}
{\sc W.~Tollmien}, {\em {\"U}ber die entstehung der turbulenz}, Vortr{\"a}ge
  aus dem Gebiete der Aerodynamik und verwandter Gebiete: Aachen 1929,  (1930),
  ~18--21.

\bibitem{trefethen1993}
{\sc L.~N. Trefethen, A.~E. Trefethen, S.~C. Reddy, and T.~A. Driscoll}, {\em
  Hydrodynamic stability without eigenvalues}, Science, 261 (1993), ~578--584.

\bibitem{WeiZhangZhao2018}
{\sc D.~Wei, Z.~Zhang, and W.~Zhao}, {\em Linear inviscid damping for a class
  of monotone shear flow in {S}obolev spaces}, Comm. Pure Appl. Math., 71
  (2018), ~617--687.

\bibitem{YangZhang2023}
{\sc T.~Yang and Z.~Zhang}, {\em Linear instability analysis on compressible
  {N}avier-{S}tokes equations with strong boundary layer}, Arch. Ration. Mech.
  Anal., 247 (2023), ~Paper No. 83, 53.

\bibitem{zettl2005sturm}
{\sc A.~Zettl}, {\em Sturm-liouville theory}, no.~121, American Mathematical
  Soc., 2005.

\bibitem{Zillinger2017jde}
{\sc C.~Zillinger}, {\em On circular flows: linear stability and damping}, J.
  Differential Equations, 263 (2017), ~7856--7899.

\end{thebibliography}

\end{document}